\newtheorem{theorem}{Theorem}[section] 
\theoremstyle{definition} \newtheorem{definition}[theorem]{Definition}
\newtheorem{remark}[theorem]{Remark} 
 \numberwithin{equation}{section}
  \newcommand{\Q}{\mathbb{Q}} \newcommand{\R}{\mathbb{R}}
 \newcommand{\A}{\mathbb{A}}
  \newcommand{\W}{\Omega}
\newcommand{\w}{\omega}  
\newcommand{\dd}{\textsf{d}} 
\newcommand{\wit}{\widetilde} 
\newcommand{\lsm}{\left[\!\begin{smallmatrix}}
    \newcommand{\rsm}{\end{smallmatrix}\!\right]} 
\newcommand{\des}{\displaystyle}
\DeclareMathOperator{\cls}{cls} 
\DeclareMathOperator{\Int}{Int} 
\definecolor{col}{rgb}{0,0,0.6}
\begin{document}
\title[Attractors in almost periodic Nicholson systems] {Attractors in almost
  periodic Nicholson systems and some numerical simulations}
\author[A.M. Sanz]{Ana M. Sanz} \author[V.M.~Villarragut]{V\'{\i}ctor
  M. Villarragut}
\address[A.M. Sanz]{Departamento de Did\'{a}ctica de las Ciencias
  Experimentales, Sociales y de la Matem\'{a}tica, Facultad de Educaci\'{o}n,
  Universidad de Valladolid, 34004 Palencia, Spain, and member of IMUVA,
  Instituto de Investigaci\'{o}n en Mate\-m\'{a}\-ti\-cas, Universidad de
  Valladolid.} \email{anamaria.sanz@uva.es}
\address[V.M.~Villarragut]{Departamento de Matem\'{a}tica Aplicada a la
  Ingenier\'{\i}a Industrial, Universidad Polit\'{e}cnica de Madrid, Calle de
  Jos\'{e} Guti\'{e}rrez Abascal 2, 28006 Madrid, Spain.}
\email{victor.munoz@upm.es} \thanks{The first author was partly supported by
  MICIIN/FEDER under project PID2021-125446NB-I00 and by Universidad de
  Valladolid under project PIP-TCESC-2020. The second author was supported by
  MICINN/FEDER under project PID2021-125446NB-I00.  } \date{}
\begin{abstract}
  The existence of a global attractor is proved for the skew-product semiflow
  induced by almost periodic Nicholson systems and new conditions are given for
  the existence of a unique almost periodic positive solution which
  exponentially attracts every other positive solution. Besides, some numerical
  simulations are included to illustrate our results in some concrete Nicholson
  systems.
\end{abstract}
\keywords{Delay equations, population dynamics, persistence, global attractor,
  almost periodic Nicholson systems} \subjclass{34K20, 34K60, 37C65, 92D25}
\renewcommand{\subjclassname}{\textup{2020} Mathematics Subject Classification}
\maketitle
\section{Introduction}\label{sec-intro}
After Gurney et al.~\cite{gubl} came up with a scalar delay equation called
Nicholson's blowflies equation, there has been an increasing interest in the
dynamical behaviour of the solutions of this equation, as well as of its
generalisations, such as Nicholson systems or their non-autonomous versions,
often periodic or almost periodic.  Firstly, the interest is in the extinction
versus the persistence of the population, or of the population within some
patch, when dealing with a compartmental model. In the case of persistence, the
goal is to describe the picture of the population's evolution, depending on the
initial situation. These models lie within the field of delayed functional
differential equations (FDEs for
short). 
A relevant reference for the theory of FDEs is Hale and Verduyn Lunel~\cite{have}.

Although there is extensive literature on Nicholson models, we mention some
publications which are closer to our approach, namely, the works by
Faria~\cite{faria11,faria17,faria21}, Faria et al.~\cite{faos}, Faria and
R\"{o}st~\cite{faro}, and Obaya and Sanz~\cite{obsa16,obsa18}. The reader can find many
other references therein. In many papers dealing with the almost periodic model,
the idea is to impose conditions forcing an invariant zone where a unique almost
periodic solution is found by means of fixed-point theorems. New alternative
methods can be found in Zhang et al.~\cite{zhetal}. Very recently, almost periodic
Nicholson systems have been considered by Novo et al.~\cite{nosv}, as models in
biology where the use of the exponential ordering can lead to new conditions
that force the existence of a unique attracting almost periodic solution. This
turns out to be the case for persistent systems provided that the delays are
small enough.

In this paper, we apply methods of the theory of skew-product semiflows to
analyse the long-term dynamics of almost periodic Nicholson systems. The
monograph by Shen and Yi~\cite{shyi} can be a useful reference for this theory.
Due to the time dependence of the system, solutions do not define a semiflow in
a direct way. The almost periodic time variation in the model permits adding a
compact base flow component $\Omega$ by means of the so-called hull construction, so
that solutions induce a dynamical system on a product space of the form
$\Omega\times X$. In the scalar case, $X$ is the space of continuous functions on an
interval $[-r,0]$, where $r$ is the delay in the equation and, in the
$m$-dimensional case, $X$ is the product of $m$ such spaces. Then, dynamical
techniques are applied in order to prove the existence of a global attractor in
the standard positive cone $\Omega\times X_+$,
which 
is the appropriate set from the biological point of view. This is a basic result
which guarantees the existence of a subset approached by the trajectories in
$\Omega\times X_+$ as time evolves. When persistence is assumed, then there also exists a
global attractor in the interior of the positive cone $\Omega\times \Int X_+$.

Describing the structure of an attractor is a difficult task in general. Here,
we focus on finding new conditions which imply that the latter attractor is as
simple as it can be. If one thinks of an autonomous equation, that means a
globally attracting equilibrium point. The counterpart of this simple situation
in the non-autonomous setting is a unique attracting invariant set
$K\subset \Omega\times \Int X_+$ which is a copy of the base flow $\W$, that is,
$K=\{(\w,b(\w))\mid\w\in \Omega\}$ for a continuous map $b:\Omega\to \Int X_+$. For the initial
Nicholson model, this implies the existence of a unique attracting positive
almost periodic solution, so that the behaviour of any other positive solution
is asymptotically almost periodic.

It is important to mention that, although the Nicholson systems are not
cooperative, they induce a local monotone and concave skew-product semiflow
$\tau$ in a neighbourhood of $\Omega\times \{0\}$.  This allows us to apply standard methods
of comparison of solutions for the usual order in $X_+$, as well as the general
theory for monotone and concave skew-product semiflows by N\'{u}\~{n}ez et
al.~\cite{nuos4}.

The conditions in this work guaranteeing the existence of a unique attracting
positive almost periodic solution are complementary to those in the
literature. This fact extends the scope of applicability of our results and
allows us to illustrate the structure of the global attractor with the aid of
numerical techniques in these new cases. Moreover, the behaviour of the global
attractors when the migration and mortality rates of the Nicholson system vary
is investigated numerically.

We briefly describe the structure of the paper. In Section~\ref{sec-preli}, we
include some preliminaries to make the paper reasonably self-contained. In
Section~\ref{sec-nicholson}, the theoretical results on the existence of attractors
for Nicholson systems are presented and, under the hypothesis of uniform
persistence, precise inequalities are given which imply that the attractor in
the interior of the positive cone is an exponentially stable copy of the
base. Results of this type have also been obtained by
Faria~\cite{faria17,faria21}. Finally, in Section~\ref{sec-numerical}, some numerical
simulations illustrate the applicability of our results and compare them with
other results given in the previous literature.
\section{Some preliminaries}\label{sec-preli}
We include some basic concepts of the theory of non-autonomous dynamical systems
relevant in this work. In Section~\ref{sec-nicholson}, we will provide a
detailed explanation of the process to address the study of Nicholson systems
in this context, by means of the hull construction.

Let $(\Omega,d)$ be a compact metric space. A real {\em continuous flow\/}
$(\Omega,\sigma,\R)$ is defined by a continuous map
$\sigma: \R\times \Omega \to \Omega,\; (t,\w)\mapsto \sigma(t,\w)=\sigma_t(\w)=\w{\cdot}t$ satisfying (i)
$\sigma_0=\text{Id}$, and (ii) $\sigma_{t+s}=\sigma_t\circ\sigma_s$ for all $s$,
$t\in\R$\,.  The set $\{ \w{\cdot}t\mid t\in\R\}$ is called the {\em orbit\/} of the point
$\w$. We say that a subset $\Omega_1\subset \Omega$ is {\em
  $\sigma$-invariant\/} if $\sigma_t(\Omega_1)=\Omega_1$ for every $t\in\R$.  The flow
$(\Omega,\sigma,\R)$ is called {\em minimal\/} if it does not contain properly any other
compact $\sigma$-invariant set, or equivalently, if every orbit is dense.

Given a continuous flow $(\W,\sigma,\R)$ on a compact metric space $\W$ and a
complete metric space $(X,\dd)$, a continuous {\em skew-product semiflow\/}
$(\Omega\times X,\tau,\,\R_+)$ on the product space $\Omega\times X$ is determined by a continuous map
\begin{equation*}
  \begin{array}{cccl}
    \tau \colon  &\R_+\times \Omega\times X& \longrightarrow & \Omega\times X \\
              & (t,\w,x) & \mapsto &(\w{\cdot}t,u(t,\w,x))
  \end{array}
\end{equation*}
which preserves the flow on $\Omega$, called the {\em base flow\/}.  The semiflow
property means (i) $\tau_0=\text{Id}$, and (ii) $\tau_{t+s}=\tau_t \circ \tau_s$ for
$t,s\geq 0,$ where $\tau_t(\w,x)=\tau(t,\w,x)$ for each
$(\w,x) \in \Omega\times X$ and $t\in \R_+$.  This leads to the so-called (nonlinear)
semicocycle property,
\begin{equation}\label{semicocycle}
  u(t+s,\w,x)=u(t,\w{\cdot}s,u(s,\w,x))\quad\mbox{for $t,s\ge 0$ and $(\w,x)\in \Omega\times X$}.
\end{equation}

The set $\{ \tau(t,\w,x)\mid t\geq 0\}$ is the {\em semiorbit\/} of the point
$(\w,x)$. A subset $K$ of $\Omega\times X$ is {\em positively invariant\/} if
$\tau_t(K)\subset K$ for all $t\geq 0$ and it is $\tau$-{\em invariant\/} if
$\tau_t(K)= K$ for all $t\geq 0$.  A compact $\tau$-invariant set $K$ for the semiflow is
{\em minimal\/} if it does not contain any nonempty compact $\tau$-invariant set
other than itself.

Some relevant references for global attractors and pullback attractors in non-autonomous
dynamical systems are Carvalho et al.~\cite{book:CLR} and Kloeden and Rasmussen
\cite{klra}. 
For a skew-product semiflow over a compact base flow $\W$, the {\em global
  attractor\/} $\A\subset \W\times X$, when it exists, is an invariant compact set
attracting a certain class $\mathcal{D}(X)$ of subsets of $\W\times X$ forwards in time;
namely,
\[
  \lim_{t\to\infty} {\rm dist}(\tau_t(\W\times X_1),\A)=0 \quad\text{for each}\; X_1\in \mathcal{D}(X)\,,
\]
for the Hausdorff semidistance. The standard choices for $\mathcal{D}(X)$ are either the
class $\mathcal{D}_b(X)$ of bounded subsets of $X$ or the class
$\mathcal{D}_c(X)$ of compact subsets of $X$ (see Cheban et al.~\cite{chks}).

Since $\W$ is compact, the non-autonomous set $\{A(\w)\}_{\w\in \W}$, formed by the
$\w$-sections of $\A$ defined by $A(\w)=\{x\in X\mid (\w,x)\in \A\}$ for each
$\w\in \W$, is a {\em pullback attractor\/}, that is, $\{A(\w)\}_{\w\in \W}$ is
compact, invariant, and it pullback attracts all the sets $X_1 \in \mathcal{D}(X)$:
\begin{equation}\label{eq:pullback}
  \lim_{t\to\infty} {\rm dist}(u(t,\w{\cdot}(-t),X_1),A(\w))=0\quad \text{for all}\; \w\in \W\,.
\end{equation}

Note that the notion of pullback attractor is well-defined within the context of
semiflows because the time variable in~\eqref{eq:pullback} is positive.


\section{Attractors in Nicholson systems}\label{sec-nicholson}
In this section, we apply the theory of non-autonomous dynamical systems to get
some useful information about the so-called Nicholson systems, regarding the
existence of attractors. We also determine new conditions to ensure a simple
structure of the global attractor, meaning the existence of a unique almost
periodic attracting solution of the Nicholson system. For completeness, recall
that a continuous function $h:\R\to\R$ is almost periodic if, for every
$\varepsilon>0$, the set of the so-called $\varepsilon$-{\em periods\/} of $h$,
$\{s\in\R\mid |h(t+s)-h(t)|<\varepsilon \;\text{for all}\; t\in\R\}$, is relatively dense.

Since the model has been explained in detail in many publications
(e.g.,~see~\cite{faria17}, \cite{faria21},~\cite{nosv}, and~\cite{obsa16}), we do not include
here the details of its history or the biological meaning of the imposed
conditions. We consider an $m$-dimensional system of delay FDEs with patch
structure ($m$ patches) and a nonlinear term of Nicholson type, where the
environment exhibits an almost periodic time variation:
\begin{equation}\label{nicholson delay}
  y_i'(t)=-\wit d_i(t)\,y_i(t) +\des \sum_{j=1}^m \wit a_{ij}(t)\,y_j(t) + \wit\beta_{i}(t)\,y_i(t-r_i)\,e^{-\wit c_i(t)\,y_i(t-r_i)}\,,\quad t\geq 0\,
\end{equation}
for $i=1,\ldots,m$. Here, $y_i(t)$ denotes the density of the population on patch
$i$ at time $t\geq 0$ and $r_i>0$ is the maturation time on that patch. The
coefficient $\wit a_{ij}(t)$ stands for the migration rate of the population
moving from patch $j$ to patch $i$ at time $t\geq 0$. Finally, the nonlinear term
is the delay Nicholson term. We make the following assumptions on the
coefficient functions:
\par\smallskip
\begin{enumerate}[label=\upshape(\text{a$\arabic*$}),series=nicholson_properties,leftmargin=27pt]
\item\label{a1} $\;\wit d_i(t)$, $\wit a_{ij}(t)$, $\wit c_i(t)$ and
  $\wit \beta_{i}(t)$ are almost periodic functions on $\R$;
\item\label{a2} $\;\wit d_i(t)\geq d_0>0$ for each $t\in\R$ and $i\in\{1,\ldots,m\}$;
\item\label{a3} $\;\wit a_{ij}(t)$ are all nonnegative functions and
  $\wit a_{ii}$ is identically null;
\item\label{a4} $\;\wit\beta_{i}(t)>0$ for each $t\in \R$ and $i\in\{1,\ldots,m\}$;
\item\label{a5} $\;\wit c_i(t)\geq c_0>0$ for each $t\in\R$ and $i\in\{1,\ldots,m\}$;
\item\label{a6} $\;\wit d_i(t)-\sum_{j=1}^m \wit a_{ji}(t)>0$ for each $t\in \R$ and
  $i\in\{1,\ldots,m\}\,$.
\end{enumerate}
\par\smallskip\noindent
Although the procedure to build the {\em hull\/} of the Nicholson system has
recently been explained in detail in~\cite{nosv}, we include it here for the sake of
completeness. Take $X=C([-r_1,0])\times \ldots \times C([-r_m,0])$ with the usual cone of
positive elements, denoted by $X_+$, and the supremum norm. Namely,
$X_+=\{\phi\in X\mid \phi_i(s)\geq 0 \; \text{for}\; s\in [-r_i,0], \;1\leq i\leq m\}$ with interior
$\Int X_+=\{\phi\in X\mid \phi_i(s)> 0 \; \text{for}\; s\in [-r_i,0],\;1\leq i\leq m\}$. Then,
$X$ is a strongly ordered Banach space. Note that, for $y\in \R^m$, $y\geq 0$ means
that all components are nonnegative and $y\gg 0$ means that all components are
positive.  The induced partial order relation on $X$ is then given by:
\begin{equation*}
  \begin{split}
    \phi\le \psi \quad &\Longleftrightarrow \quad \psi-\phi\in X_+\,;\\
  \phi\ll \psi \quad &\Longleftrightarrow \quad \psi-\phi\in \Int X_+\,.\qquad\quad\quad~
\end{split}
\end{equation*}

The usual notation is that, given a continuous map $y:[-r,\infty)\to \R^m$ for
$r:=\max(r_1,\ldots,r_m)$ and a time $t\geq 0$, $y_t$ denotes the map in $X$ defined by
$(y_t)_i(s)=y_i(t+s)$, $s\in [-r_i,0]$, for each component $i=1,\ldots,m$. Let us
write~\eqref{nicholson delay} as $y_i'(t)=f_i(t,y_t)$, $1\leq i\leq m$, for the maps
$f_i:\R\times X\to \R$,
\begin{equation*}
  f_i(t,\phi)=- \wit d_i(t)\,\phi_i(0) +\des \sum_{j=1}^m  \wit a_{ij}(t)\,\phi_j(0) + \wit \beta_{i}(t)\,\phi_i(-r_i)\,e^{-\wit c_i(t)\,\phi_i(-r_i)}\,.
\end{equation*}
Consider the map $l:\R\to\R^N$ given by all the almost periodic coefficients
$l(t)=(\wit d_i(t),\wit a_{ij}(t),\wit \beta_i(t),\wit c_i(t))$ and let $\W$ be its
hull, that is, the closure of the time-translates of $l$ for the compact-open
topology. Then, $\W$ is a compact metric space thanks to the boundedness and
uniform continuity of almost periodic maps. Besides, the shift map
$\sigma:\R\times \W\to\W$, $(t,\w)\mapsto \w{\cdot}t$, with
$(\w{\cdot}t)(s)= \w(t+s)$, $s\in\R$, defines an almost periodic and minimal flow. By
considering the continuous nonnegative maps
$d_i,\,a_{ij},\,\beta_{i},\,c_i:\W\to\R$ such that
$(d_i(\w), a_{ij}(\w), \beta_i(\w), c_i(\w))=\w(0)$, the initial system is included in
the family of systems over the hull, which can be written for each $\w\in\W$~as
\begin{equation}\label{nicholson delay hull}
  y'_i(t)=- d_i(\w{\cdot}t)\,y_i(t) +\des \sum_{j=1}^m  a_{ij}(\w{\cdot}t)\,y_j(t) + \beta_{i}(\w{\cdot}t)\,y_i(t-r_i)\,e^{-c_i(\w{\cdot}t)\,y_i(t-r_i)}\,
\end{equation}
for $i=1,\ldots,m$.  For each $\w \in \W$ and $\varphi\in X$, the solution of~\eqref{nicholson delay
  hull} with initial value $\varphi$ is denoted by $y(t,\w,\varphi)$. The solutions induce a
skew-product semiflow $\tau:\R_+\times \W\times X\to \W\times X$,
$(t,\w,\varphi)\mapsto (\w{\cdot}t,y_t(\w,\varphi))$ (in principle only locally defined). This semiflow
has a trivial minimal set $\W\times \{0\}$, as the null map is a solution of all the
systems over the hull.

Note that this family of systems does not satisfy the standard {\it
  cooperative\/} or {\it quasimonotone condition\/}, which for a single system
of delay FDEs $y'(t)=g(t,y_t)$ given by a map $g:\R_+\times X\to \R^m$ reads as: whenever
$\phi\leq \psi$ and $\phi_i(0)=\psi_i(0)$ for some $i$, then
$g_i(t,\phi)\leq g_i(t,\psi)$ for all $t\geq 0$.  This condition implies that ordered initial
data $\phi\leq \psi$ lead to ordered solutions, as far as defined (e.g., see
Smith~\cite{smit}). In any case, the set $\W\times X_+$ is invariant for the dynamics,
that is, the solutions of~\eqref{nicholson delay hull} starting inside the positive
cone remain inside the positive cone while defined
(see~\cite[Theorem~5.2.1]{smit}). Also, if $\varphi\geq 0$ with
$\varphi(0)\gg 0$, then $y(t,\w,\varphi)\gg 0$ for all $t\geq 0$. Besides, the induced semiflow is
globally defined on $\W\times X_+$, since all the solutions of~\eqref{nicholson delay hull}
are bounded (see~\cite[Theorem~3.3]{obsa18}).

The advantage when a global attractor exists for the induced semiflow is that
there is a dynamical object approached by the trajectories in the product space
$\W\times X_+$ in a forward sense.  In our first result, we prove that the induced
semiflow always has a global attractor. To the best of our knowledge, this
general result has not been stated before. Recall that $\mathcal{D}_b(X_+)$ stands for the
class of bounded sets in $X_+$. Also, given a constant $\rho>0$, we denote by
$\bar\rho$ either the vector in $\R^m$ or the map in $X$ which takes the constant
value $\rho$ in all components.

\begin{theorem}\label{teor:global atractor Ni}
  Assume that the Nicholson system~\eqref{nicholson delay}
  satisfies~\ref{a1}--\ref{a6}. Then, there is a global attractor
  $\A\subset \W\times X_+$ with respect to the class $\mathcal{D}_b(X_+)$ for the induced skew-product
  semiflow $\tau:\R_+\times \W\times X_+\to \W\times X_+$.
\end{theorem}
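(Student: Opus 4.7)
The plan is to verify the three standard ingredients that yield a global attractor for a continuous skew-product semiflow over a compact base (see, e.g., Carvalho et al.~\cite{book:CLR}): global definition and continuity of $\tau$ on $\W\times X_+$, existence of a bounded absorbing set for the class $\mathcal{D}_b(X_+)$ uniformly in $\w$, and asymptotic compactness of $\tau$ on bounded sets. The first of these is already recorded in the excerpt as a consequence of~\cite[Theorem~3.3]{obsa18}.

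For dissipativity, the key observation is that the Nicholson nonlinearity $z\mapsto z\,e^{-c_i(\w)z}$ is uniformly bounded by $1/(e\,c_0)$ on $\R_+$ thanks to~\ref{a5}, and that~\ref{a1} together with the hull construction gives continuous, hence bounded, coefficients $d_i,a_{ij},\beta_i,c_i$ on the compact set $\W$. This yields a global bound $M>0$ on the whole nonlinear forcing in~\eqref{nicholson delay hull}. Condition~\ref{a6}, extended to $\W$ by continuity and minimality, provides a constant $\alpha>0$ such that $d_i(\w)-\sum_{j=1}^m a_{ji}(\w)\geq \alpha$ for every $\w\in\W$ and every $i$. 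Summing the equations in~\eqref{nicholson delay hull} and swapping the order of summation in the migration terms, any solution with initial datum in $X_+$ satisfies
\[
  \frac{\dd}{\dd t}\sum_{i=1}^m y_i(t)\;\leq\;-\alpha\sum_{i=1}^m y_i(t)+mM
\]
on its interval of definition, so a Gronwall comparison furnishes $\rho>mM/\alpha$ such that the set $B_\rho:=\{\varphi\in X_+\mid\n{\varphi}\leq\rho\}$ absorbs every bounded subset of $X_+$ in a time depending only on its norm bound, uniformly in $\w\in\W$. Establishing this uniformity over the hull is the main technical step; it rests on the uniform continuity of the coefficient maps on the compact set $\W$ and is in fact implicit in~\cite[Theorem~3.3]{obsa18}.

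Asymptotic compactness then follows from a standard Arzel\`a--Ascoli argument. Once a trajectory lies in $\W\times B_\rho$, the right-hand side of~\eqref{nicholson delay hull} is bounded by some constant $K=K(\rho)$ uniformly in $\w\in\W$, so each component $y_i(\cdot,\w,\varphi)$ is $K$-Lipschitz on any forward interval of length at least $r:=\max_i r_i$. Combined with the pointwise bound and the compactness of $\W$, this shows that for every bounded $B\subset X_+$ and every $t\geq r$ the set $\{\tau(t,\w,\varphi)\mid \w\in\W,\ \varphi\in B\}$ is relatively compact in $\W\times X$. Taking the $\w$-limit set of the absorbing set $\W\times B_\rho$ then yields the desired global attractor $\A\subset\W\times X_+$ for the class $\mathcal{D}_b(X_+)$.
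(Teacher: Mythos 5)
Your proof is correct, and it follows a genuinely different and more elementary route than the paper's. For the dissipativity step, you sum the $m$ equations, swap the order of summation so that condition~\ref{a6} can be applied columnwise, bound the Nicholson nonlinearity by $\beta_i^+/(e\,c_0)$, and close the estimate with a scalar Gronwall inequality for $\sum_i y_i(t)$. This bypasses entirely the paper's argument, which introduces a majorant cooperative family of linear ODEs and invokes exponential dichotomy theory (referred to~\cite[Theorem~6.1]{obsa16}) to obtain the ultimate bound $\rho$. Your one-dimensional Gronwall computation delivers the same uniform ultimate bound in a self-contained way; the paper's approach is heavier but sets up machinery it reuses later (comparison with cooperative families). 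The second difference is organisational: you split the argument into a bounded absorbing set plus asymptotic compactness via Arzel\`a--Ascoli, whereas the paper builds a compact absorbing set $H=\cls\{y_r(\w,\varphi)\mid\w\in\W,\ 0\le\varphi\le\bar\rho\}$ in one stroke using the smoothing property of $y_r$ and then cites~\cite[Theorem~1.36]{klra}. Both are standard and equivalent criteria, and your Arzel\`a--Ascoli step is exactly the underlying reason $y_r$ is a compact map, so the substance coincides. Two small remarks: your appeal to~\cite[Theorem~3.3]{obsa18} for uniformity over the hull is not really needed, since your Gronwall estimate already gives an absorption time depending only on the norm bound and is manifestly uniform in $\w$; and when passing from the bound on $\sum_i y_i(t)$ to the bound on $\n{y_t}$ in $X$ you should wait an additional $r=\max_i r_i$ units of time, which you implicitly do in the compactness step.
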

\begin{proof}
  To get the existence of a global attractor, it suffices to find an absorbing
  compact set (e.g., see \cite[Theorem~1.36]{klra}). Under the regularity conditions
  satisfied by the Nicholson systems, for $r:=\max(r_1,\ldots,r_m)$, the map
  $y_r:\W\times X\to X$ is compact, meaning that it takes bounded sets into relatively
  compact sets. Then, given a constant $\rho>0$, the set
  \begin{equation}\label{eq:def_H}
    H=\cls \big\{y_r(\w,\varphi) \mid \w\in \W\,,\; 0 \leq \varphi \leq \bar \rho \big\}\subset X_+
  \end{equation}
  is compact. Now we search for the appropriate $\rho>0$ so that $\W\times H$ is
  absorbing, that is, for every bounded subset $X_1\subset X_+$ there exists
  $t_1=t_1(X_1)$ such that $\tau_t(\W\times X_1)\subset \W\times H$ for all $t\geq t_1$.

  For the constants $c_0$ given in condition~\ref{a5} and
  $\beta_i^+:=\sup_{t\in\R}\wit \beta_i(t)$, $1\leq i\leq m$, let us consider the family of ODEs
  given by
  \begin{equation}\label{eq:odes}
    z'_i(t)=- d_i(\w{\cdot}t)\,z_i(t) +\des \sum_{j=1}^m  a_{ij}(\w{\cdot}t)\,z_j(t)+ \frac{\beta_i^+}{e\,c_0}\,,\quad 1\leq i\leq m\,,\; \w\in\W \,.
  \end{equation}
  This family is cooperative by \ref{a3} and it is easy to check that it is a
  majorant family of the Nicholson family~\eqref{nicholson delay hull}, considering
  the extrema of the maps $h_i(y)=y\,e^{- c_i(\w)\,y}$ for $y\geq 0$,
  $1\leq i\leq m$, and condition~\ref{a5}. Let $z(t,\w,z_0)$ denote the solution of the
  previous system of ODEs for $\w\in\W$ with initial value $z_0\in\R^m$.

  It can be deduced from condition~\ref{a6} that the homogeneous part of the family
  of ODEs~\eqref{eq:odes} admits an exponential dichotomy with full stable
  subspace. From this, it follows that the solutions of~\eqref{eq:odes} are
  ultimately bounded, uniformly for $\w\in\W$, that is, there exists $\rho>0$ such
  that, given an initial condition $z_0\in\R^m$, there is a $t_0=t_0(z_0)$ such that
  $z(t,\w,z_0)\leq \bar \rho$ for all $\w\in\W$ and $t\geq t_0$. We refer the reader to the
  proof of~\cite[Theorem~6.1]{obsa16} for all the details.

  To finish, let us see that this value of $\rho$ serves our purposes. Given a
  bounded subset $X_1\subset X_+$, we can take $\varphi_0\in X_+$ which satisfies
  $\varphi\leq \varphi_0$ for all $\varphi\in X_1$. For $z_0=\varphi_0(0)$, we take the corresponding
  $t_0$ as in the previous paragraph. Then, by comparing the solutions
  (see~\cite[Theorem~5.1.1]{smit}),
  $y(t,\w,\varphi)\leq z(t,\w,\varphi(0))\leq z(t,\w,\varphi_0(0))\leq \bar \rho$ for all
  $(\w,\varphi)\in \W\times X_1$ and $t\geq t_0$. That is,
  $0\leq y_t(\w,\varphi)\leq \bar \rho$ for all $t\geq t_0+r$. Now, it suffices to apply the
  semicocycle property~\eqref{semicocycle} to conclude that, if $t\geq t_1:= t_0+2r$, then
  $y_t(\w,\varphi)=y_r(\w{\cdot}(t-r),y_{t-r}(\w,\varphi))\in H$ for all
  $(\w,\varphi)\in\W\times X_1$, as wanted.  We are done with the proof.
\end{proof}
In some situations, we can give a description of the global attractor. Often the
interest is in the extinction {\it versus\/} the persistence of the
species. Regarding the extinction at an exponential rate, Novo et al.~\cite{noos07}
have proved that the uniform exponential stability of the null solution is
equivalent to the uniform exponential stability of the null solution of the
linearised systems along the null solution,
\begin{equation}\label{nicholson delay lineal}
  z_i'(t)=-d_i(\w{\cdot}t)\,z_i(t) +\des \sum_{j=1}^m  a_{ij}(\w{\cdot}t)\,z_j(t) + \beta_{i}(\w{\cdot}t)\,z_i(t-r_i)\,,\quad t\geq 0\,,
\end{equation}
for $i=1,\ldots,m$, for each $\w\in \W$. In~\cite[Proposition~3.4]{noos07} one can find a
series of equivalent conditions for this behaviour. In particular, it is enough
that the null solution of~\eqref{nicholson delay lineal} is uniformly asymptotically
stable.  In this situation, the global attractor in the positive cone is the
trivial set $\A=\W\times \{0\}$.
\par
Hereafter, we focus on situations in which the population persists. First of
all, we give the definition of persistence for the initial system~\eqref{nicholson
  delay}, meaning that, if there are some individuals on every patch at the
initial time $t=0$, the population will surpass a positive lower bound on all
the patches in the long run. We use the terminology introduced in~\cite{obsa18}.
\begin{definition}\label{defi persistence Nicholson}
  The Nicholson system~\eqref{nicholson delay} is {\em uniformly persistent at
    $0$} if there exists $M>0$ such that for every initial map $\varphi\geq 0$ with
  $\varphi(0)\gg 0$ there exists a time $t_0=t_0(\varphi)$ such that
  \[ y_i(t,\varphi)\geq M \quad \text{for all }\;t\geq t_0 \;\text{ and }\; i=1,\ldots,m \,.\]
\end{definition}
As shown in~\cite[Theorem~3.4]{obsa18}, this dynamical property for the system
implies the uniform persistence of the whole family \eqref{nicholson delay hull},
according to the next definition. Because of this, we say that Nicholson systems
are well-behaved, as this implication is not to be expected in general
(see~\cite{obsa18} for more details).
\begin{definition}\label{defi persistence hull}
  The skew-product semiflow induced by the family of systems~\eqref{nicholson delay
    hull} is {\em uniformly persistent} in the interior of the positive cone
  $\Int X_+$ if there is a map $\psi\gg 0$ such that, for every $\w\in \W$ and every
  initial map $\varphi\gg 0$, there exists a time $t_0=t_0(\w,\varphi)$ such that
  $y_t(\w,\varphi)\geq \psi$ for all $t\geq t_0$.
\end{definition}
When the Nicholson system is uniformly persistent at $0$, the induced semiflow
has an attractor in the interior of the positive cone.
\begin{theorem}\label{teor:atractor cono pos Nich}
  Assume that the Nicholson system~\eqref{nicholson delay} satisfies conditions
  \ref{a1}--\ref{a6} and it is uniformly persistent at $0$. Then, there exists a global
  attractor for $\tau$ in $\W\times \Int X_+$ with respect to the class of compact
  subsets of $ \Int X_+$.
\end{theorem}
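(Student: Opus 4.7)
The plan is to exhibit a compact absorbing set $\tilde B\subset \W\times \Int X_+$ for the class of compact subsets of $\Int X_+$, and then to invoke the standard criterion for existence of a global attractor (e.g.\ \cite[Theorem~1.36]{klra}). Theorem~\ref{teor:global atractor Ni} already supplies the upper half: its proof produces $\rho>0$ and the compact set $H$ of~\eqref{eq:def_H} such that $\W\times H$ absorbs bounded subsets of $X_+$. In particular, for every compact $K\subset \Int X_+$ there is a time $t_1=t_1(K)$ with $y_t(\w,\varphi)\in H$ for all $t\ge t_1$ and every $(\w,\varphi)\in \W\times K$.

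The complementary lower bound comes from uniform persistence. Let $\psi\gg 0$ be the map of Definition~\ref{defi persistence hull}. The goal will be to show that for every compact $K\subset \Int X_+$ there exists $t_2=t_2(K)$ such that $y_t(\w,\varphi)\ge \psi/2$ for all $t\ge t_2$ and every $(\w,\varphi)\in\W\times K$. The two key inputs are that (i)~every $\w$-limit set of a point in $\W\times \Int X_+$ lies in $\W\times\{\phi\ge\psi\}$, which follows by passing to the limit in the inequality of Definition~\ref{defi persistence hull}; and (ii)~the semiflow is asymptotically compact, since $\W\times K$ is forward-attracted by the compact global attractor $\A$ of Theorem~\ref{teor:global atractor Ni}. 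Combining (i) and (ii) with compactness of $\W\times K$ and continuity of $\tau$, and relaxing the bound from $\psi$ to $\psi/2$ so that one can absorb the error coming from the neighbourhood approximation, yields the desired uniform inequality.

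With both bounds in hand, $\tilde H:=H\cap\{\phi\in X\mid \phi\ge \psi/2\}$ is a closed subset of the compact set $H$, hence compact; since $\psi/2\gg 0$, one has $\tilde H\subset \Int X_+$. Then $\tilde B:=\W\times \tilde H$ is a compact subset of $\W\times \Int X_+$ with $\tau_t(\W\times K)\subset \tilde B$ for all $t\ge \max(t_1(K),t_2(K))$, so it absorbs every compact $K\subset \Int X_+$, and the standard criterion produces the desired attractor. The main obstacle is precisely the uniformization step: pointwise persistence alone does not supply a common time after which the lower bound holds on all of $\W\times K$, and it is the interplay between forward attraction to $\A$ and the mild relaxation of the bound from $\psi$ to $\psi/2$ that makes a single, uniform $t_2(K)$ available.
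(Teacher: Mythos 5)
Your overall plan coincides with the paper's: build a compact absorbing set of the form $H\cap\{\phi\ge\psi'\}$ for a suitable $\psi'\gg 0$, using the $H$ of Theorem~\ref{teor:global atractor Ni} for the upper bound and persistence for the lower one, and then invoke the standard criterion. Item (i), that every individual $\omega$-limit set in $\W\times\Int X_+$ lies in $\W\times\{\phi\ge\psi\}$, is correct and follows directly from Definition~\ref{defi persistence hull}.

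However, the crucial uniformization step is not proved, and the argument as written has a genuine gap. Definition~\ref{defi persistence hull} gives a fixed $\psi\gg 0$ but a time $t_0=t_0(\w,\varphi)$ that depends on the initial pair, and nothing in the hypotheses says that $t_0$ is bounded on compact subsets of $\W\times\Int X_+$. The sentence ``combining (i) and (ii) with compactness of $\W\times K$ and continuity of $\tau$, and relaxing the bound from $\psi$ to $\psi/2$ \dots{} yields the desired uniform inequality'' is an assertion, not a proof. A naive covering argument does not close the gap: continuity of $\tau$ at a fixed time $T\ge t_0(\w,\varphi)$ shows that nearby trajectories are above $\psi/2$ \emph{at time $T$}, but says nothing about later times, and the pointwise persistence time of those nearby trajectories is uncontrolled. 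Likewise, if one runs a contradiction argument and extracts a limit $(\w_0,\phi_0)\in\A$ with $\phi_0\not\ge\psi$, one obtains a full backward orbit of $(\w_0,\phi_0)$ through $\A$, but there is no immediate way to place $(\w_0,\phi_0)$ in the $\omega$-limit set of an interior point, and $\A$ itself contains the boundary set $\W\times\{0\}$. To push this through along the lines you sketch one would need an abstract ``weak persistence plus compact attractor implies strong/uniform persistence'' theorem (of Butler--McGehee, Hale--Waltman, or Smith--Thieme type, or the existence of a persistence attractor as in Magal--Zhao), which you neither cite nor reprove, and whose hypotheses on the boundary dynamics would still have to be verified for the Nicholson family.

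The paper avoids this difficulty by a different mechanism that exploits the specific structure of the model: it constructs an auxiliary \emph{cooperative and concave} family of delay systems, of class $C^1$, which shares the linearisation~\eqref{nicholson delay lineal} and minorises the Nicholson family in the long run. For that monotone family, the time to rise above $\psi$ depends only on a single lower solution $\varphi_0\ll\varphi$ (uniformly in $\w$, via Case~A1 of \cite[Theorem~3.8]{nuos4}), and comparison then transfers the uniform lower bound to the Nicholson solutions. Monotonicity is what yields the uniformity in time on compact sets; your compactness-only argument lacks a replacement for it. You should either fill in this step by invoking and verifying the hypotheses of a suitable abstract persistence theorem, or adopt the minorant cooperative family as in the paper.
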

\begin{proof}
  As in the proof of Theorem~\ref{teor:global atractor Ni}, we search for a compact
  absorbing set in $\W\times \Int X_+$. First of all, let $H$ be the compact set
  defined in~\eqref{eq:def_H}, and consider
  $ H':=H\cap \{\varphi \mid \varphi\geq \psi\}$ for a certain $\psi\gg 0$. Note that
  $H'$ is compact and it lies within the interior of the positive cone. It
  remains to prove that, making the appropriate choice of $\psi$, for each compact
  subset $X_1\subset \Int X_+$, the set $\W\times H'$ absorbs the set
  $\Omega\times X_1$, that is, there exists $t_0=t_0(X_1)$ such that
  $\tau_t(\Omega\times X_1)\subset \W\times H'$ for $t\geq t_0$. We already know by the aforementioned theorem
  that, given such a subset $X_1$, there is a $t_1=t_1(X_1)$ such that
  $\tau_t(\Omega\times X_1)\subset \W\times H$ for $t\geq t_1$. Clearly, the fact that trajectories starting
  in $X_1$ will eventually surpass a certain $\psi$ is related to the uniform
  persistence property.

  Since Nicholson systems are not cooperative, it is necessary to build an
  auxiliary family of cooperative systems, so that some results that
  enable a comparison of solutions can be applied.~(Note that we have already
  used this technique in the proof of Theorem~\ref{teor:global atractor Ni}.) The
  idea is not new, so we refer the reader to the proof of~\cite[Theorem~6.2]{obsa16}
  for all the details. A family of delay systems is built with the following
  properties: it is cooperative, concave, of class $C^1$ with respect to the
  functional variable, it shares the linearised family along the null
  solution~\eqref{nicholson delay lineal}, and it is a minorant family of~\eqref{nicholson
    delay hull} in the long run. We denote by $z(t,\w,\varphi)$ the solutions of this
  family for each $\w\in\Omega$ and $\varphi\in X_+$. Then, this cooperative family inherits the
  property of uniform persistence from the linearised family, and this happens
  uniformly for $\w\in\W$, that is, there exists $\psi\gg 0$ such that given
  $\varphi_0\gg 0$ there is a time $t_2=t_2(\varphi_0)$ such that
  $z_t(\w,\varphi_0)\geq \psi$ for all $t\geq t_2$ and all $\w\in\Omega$.  The uniformity in
  $\w$ can be justified because the uniform persistence forces the auxiliary
  family into the dynamical situation described in Case A1 in Theorem~3.8
  in~\cite{nuos4}.

  At this point, since $X_1\subset \Int X_+$, we can find a $\varphi_0\gg 0$ such that
  $\varphi_0\leq \varphi$ for all $\varphi\in X_1$, and take the corresponding
  $t_2=t_2(\varphi_0)$. Then, for all $\w\in\Omega$ and $\varphi\in X_1$,
  $\psi\leq z_t(\w,\varphi_0)\leq z_t(\w,\varphi)$ for $t\geq t_2$, by monotonicity. Since we can compare
  these solutions with those of the Nicholson systems from one time on
  (see~\cite[Theorem~5.1.1]{smit}), uniformly for $\w\in\Omega$, we find a time
  $t_3\geq t_2$ such that $\psi\leq y_t(\w,\varphi)$ for all $\w\in\Omega$,
  $\varphi\in X_1$ and $t\geq t_3$. By taking $t_0:=\max(t_1,t_3)$, the proof is finished.
\end{proof}

For Nicholson systems, the compartmental structure and the relations among the
different compartments have a strong influence on the property of uniform
persistence. One crucial fact for the nonlinear and noncooperative Nicholson
systems (among other systems, e.g., see the Mackey and Glass model for
hematopoiesis~\cite{magl}) is that its uniform persistence turns out to be
equivalent to the uniform persistence of the linearised systems along the null
solution~\eqref{nicholson delay lineal}. Since these linear equations are
cooperative, the general methods in Novo et al.~\cite{noos13} (see also~\cite{obsa16})
to study the uniform persistence of cooperative recurrent non-autonomous delay
FDEs apply, giving a complete spectral characterisation of this dynamical
property.

The next statement is part of~\cite[Theorem~3.5]{obsa18} and is included here for
completeness and because it will be useful in Section~\ref{sec-numerical}. It
offers a characterisation of the uniform persistence of an almost periodic
Nicholson system~\eqref{nicholson delay} in terms of a few Lyapunov exponents, which
can be numerically calculated.
\begin{theorem}\label{teor Nicholson system}
  Assume that the Nicholson system~\eqref{nicholson delay} satisfies
  conditions~\ref{a1}--\ref{a6}, and assume without loss of generality that the
  constant matrix $\bar A=[a_{ij}^+]$ with entries
  $a_{ij}^+ :=\sup_{t\in\R}\wit a_{ij}(t)$ has a block lower triangular structure
  \begin{equation*}
    \left[\begin{array}{cccc}
            \bar A_{11} & 0  &\ldots & 0 \\
            \bar A_{21} & \bar A_{22} & \ldots& 0 \\
\vdots & \vdots  &\ddots & \vdots \\
\bar A_{k1} & \bar A_{k2} & \ldots& \bar A_{kk}
\end{array}\right]\,
\end{equation*}
with irreducible diagonal blocks $\bar A_{jj}$ of dimension $n_j$ for
$j=1,\ldots,k$ $(n_1+\cdots+n_k=m)$. Arrange the set of delays as
$\{r_1,\ldots,r_m\}=\{r^1_1,\ldots,r^1_{n_1},\ldots,r^k_1,\ldots,r^k_{n_k}\}$ and write $X=X^{(1)}\times\ldots\times X^{(k)}$ for
\begin{equation*}
  X^{(j)}= C([-r^j_{1},0])\times \ldots \times C([-r^j_{n_j},0])\,,\quad j=1,\ldots,k\,.
\end{equation*}
For each $j=1,\ldots,k$, consider the $n_j$-dimensional almost periodic linear delay
system
\begin{equation*}
  z_i'(t)=-\wit d_i(t)\,z_i(t) +\des \sum_{l\in I_j} \wit a_{il}(t)\,z_l(t) + \wit\beta_{i}(t)\,z_i(t-r_{i})\,,\quad t\geq 0\,,
\end{equation*}
for $i\in I_j$, the set of indices corresponding to the rows of the block
$\bar A_{jj}$, and let $z^j(t,\bar 1)$ be the solution with initial map
$\bar 1$, the map with all components identically equal to $1$ in the space
$X^{(j)}$. Then, let $\wit\lambda_j$ be defined as
\begin{equation*}
  \wit\lambda_j=\lim_{t\to \infty} \frac{\log\|z_t^j(\bar 1)\|_\infty}{t}\,.
\end{equation*}
\par
Finally, consider the set of indices $I$ associated to the structure of the
linear part of the system as follows: if $k=1$, i.e., if the matrix $\bar A$ is
irreducible, let $I=\{1\}$; else, let
\begin{equation*}
  I=\{j\in\{1,\ldots,k\} \,\mid\, \bar A_{ji}=0 \text{ for all } i\not= j\},\\
\end{equation*}
that is, $I$ is composed by the indices $j$ such that all off-diagonal blocks in
the row of $\bar A_{jj}$ are null.  Then, the almost periodic Nicholson
system~\eqref{nicholson delay} is uniformly persistent at $0$ if and only if
$\wit\lambda_j>0$ for all $j\in I$.
\end{theorem}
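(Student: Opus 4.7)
The plan is to reduce the nonlinear persistence question to a spectral question about the linearised family~\eqref{nicholson delay lineal}, and then exploit the block-triangular decomposition to identify exactly which Lyapunov exponents matter. First, I would invoke the equivalence, recalled just before the statement, between uniform persistence at $0$ of the Nicholson system~\eqref{nicholson delay} and uniform persistence of its linearisation along $0$; this reduces the problem to a cooperative almost periodic linear delay system. The spectral machinery for cooperative recurrent non-autonomous delay FDEs developed in~\cite{noos13} then attaches to each irreducible subsystem a principal Lyapunov exponent whose sign controls persistence.

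The second step is to use the block lower-triangular structure of $\bar A$ to decompose the linearised system hierarchically: the components indexed by $I_j$ satisfy a closed $n_j$-dimensional cooperative delay system driven by blocks $1,\ldots,j-1$ through the off-diagonal terms $\bar A_{ji}$, $i<j$. By irreducibility of each $\bar A_{jj}$, the corresponding subsystem is irreducible in the sense required by~\cite{noos13}, and $\wit\lambda_j$ is precisely its principal Lyapunov exponent. Necessity is then straightforward: if $j\in I$, the $j$th subsystem is completely decoupled from the remaining blocks, so the full system cannot be uniformly persistent unless the $j$th subsystem is on its own, forcing $\wit\lambda_j>0$.

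For sufficiency I would argue by induction on the block index $j$, noting that $1\in I$ automatically. If $j\in I$, persistence of block $j$ is immediate from $\wit\lambda_j>0$; if $j\notin I$, some $\bar A_{ji}\neq 0$ with $i<j$, so the induction hypothesis supplies a uniform positive lower bound on block $i$, and the cooperative structure together with the irreducibility of $\bar A_{jj}$ propagates this positive forcing across all components of block $j$, irrespective of the sign of $\wit\lambda_j$. The hardest step is this last case: the entries $\wit a_{kl}(t)$ need not be bounded away from zero, so one must exploit almost periodicity of $l$ and minimality of the base flow on $\W$ to extract a genuinely uniform-in-$\w$ inflow into block $j$. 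This is exactly where the framework of~\cite{noos13} does the heavy lifting, and the full argument ultimately amounts to invoking~\cite[Theorem~3.5]{obsa18}.
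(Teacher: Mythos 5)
The paper does not actually prove this theorem: the sentence immediately preceding the statement reads ``The next statement is part of [Theorem~3.5]{obsa18} and is included here for completeness,'' so the entire content is imported from Obaya and Sanz's 2018 \emph{Nonlinearity} paper and no argument is given here. Your proposal is therefore being compared against a citation, not a proof, and in that sense you land in exactly the same place as the authors: after sketching the strategy you close by invoking \cite[Theorem~3.5]{obsa18}, which is precisely what the paper does.

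That said, the sketch you give is a faithful reconstruction of the argument one would expect from the cited references and is worth a few comments. You correctly isolate the three structural ingredients: (i) the reduction of uniform persistence at $0$ of the nonlinear Nicholson system to uniform persistence of the linearisation along the null solution (a nontrivial equivalence established for Nicholson-type nonlinearities in \cite{obsa16,obsa18}); (ii) the use of the block lower-triangular form of $\bar A$ to view the linear cooperative delay system as a hierarchy of irreducible subsystems, each with a principal Lyapunov exponent $\wit\lambda_j$ supplied by the theory in \cite{noos13}; and (iii) the observation that $I$ is exactly the set of ``source'' blocks that receive no inflow, so that necessity of $\wit\lambda_j>0$ for $j\in I$ follows from the subsystem being closed, while sufficiency is an induction along the block index using persistent inflow from earlier blocks when $j\notin I$. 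Your remark that $1\in I$ automatically is correct. You also rightly flag the genuinely delicate point: because the coefficients $\wit a_{kl}$ are only nonnegative and not bounded below, the inflow from a persistent earlier block is not pointwise bounded away from zero, and the argument has to use almost periodicity, minimality of $\W$, and the spectral/exponential separation machinery of \cite{noos13} to convert a mean positivity of the forcing into a uniform lower bound on block $j$. You also implicitly gloss over the borderline case $\wit\lambda_j=0$ for $j\in I$, which the ``if and only if'' in the statement requires one to rule out; that too is handled in the cited spectral theory. Since the paper itself offers no proof, there is no genuine route comparison to make; your sketch is consistent with the strategy of \cite{obsa18} and with the way the present authors rely on it.
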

Assuming the uniform persistence of the Nicholson system, we give a new result
on the existence of a unique positive almost periodic solution which attracts
every other positive solution as $t\to\infty$. In these cases, the attractor in the
interior of the positive cone is as simple as it can be, i.e.,~a copy of the
base which reproduces the almost periodic dynamics on the base $\W$. Briefly,
whenever the attractor lies within the region of monotonicity of $\tau$ for the
usual ordering, it is a copy of the base. This is a nontrivial generalisation to
the almost periodic case of the same result in the autonomous case:
see~\cite[Theorem~3.1]{faria11}.  Some other related results
are~\cite[Theorem~4.1]{faria17} for a class of periodic Nicholson systems
and~\cite[Theorem~3.4]{faria21}.

We want to note that in the mentioned related results, and in many others in the
literature, there are conditions which imply the uniform persistence of the
system, given in terms of the spectral bound of an associated matrix in the
autonomous case, or by introducing a positive lower bound in expressions of the
type~\eqref{zona inv}. However, we have chosen to directly assume the fact that the
system is persistent, and whenever a particular system is given, calculate the
Lyapunov exponents and check the persistence using the sufficient, but also
necessary, conditions given in Theorem~\ref{teor Nicholson system}.
\begin{theorem}\label{teor-atractor-cooperativo}
  Assume that the Nicholson system~\eqref{nicholson delay} satisfies conditions
  \ref{a1}--\ref{a6} and it is uniformly persistent at $0$. If, for every $t\in\R$,
  \begin{equation}\label{zona inv}
    0< \frac{\wit \beta_i(t)}{\wit d_i(t)-\sum_{j\not=i}  \wit a_{ij}(t)\,\frac{ c_i^+}{c_j^+}} \leq e^{c_i^-/ c_i^+}\,\quad \text{for each } i=1,\ldots, m\,,
  \end{equation}
  for the positive constants $c_i^-:=\inf_{t\in\R}\wit c_i(t)$ and
  $c_i^+:=\sup_{t\in\R}\wit c_i(t)$, then there exists a unique positive almost
  periodic solution of~\eqref{nicholson delay} which attracts every other positive
  solution at an exponential rate; more precisely, it attracts every other
  solution $y(t,\varphi)$ with initial value $\varphi\geq 0$ such that $\varphi(0)\gg 0$.
\end{theorem}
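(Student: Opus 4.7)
The plan is to exploit condition~\eqref{zona inv} to locate a positively invariant subset $K\subset X_+$ on which the Nicholson skew-product semiflow is monotone and concave, and then to invoke the theory of monotone concave skew-product semiflows from \cite{nuos4} together with the assumed uniform persistence to produce the unique almost periodic attracting solution. Concretely, I would set
\[K=\{\phi\in X_+\mid \phi_i(s)\le 1/c_i^+\text{ for all } s\in[-r_i,0],\ i=1,\ldots,m\},\]
noting that on $K$ one has $\wit c_i(t)\,\phi_i(s)\le 1$ for every $t\in\R$, so $y\mapsto y\,e^{-\wit c_i(t)y}$ is increasing and concave on the range of $\phi_i$. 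The positive invariance of $\W\times K$ under $\tau$ follows from a boundary computation: at a point $\phi$ with $\phi_i(0)=1/c_i^+$ and $\phi_j(s)\le 1/c_j^+$ for all $s$ and $j$,
\begin{align*}
y_i'(0)&\le -\wit d_i(t)\,\tfrac{1}{c_i^+}+\sum_{j\ne i}\wit a_{ij}(t)\,\tfrac{1}{c_j^+}+\wit\beta_i(t)\,\tfrac{1}{c_i^+}\,e^{-c_i^-/c_i^+}\\
&=\tfrac{1}{c_i^+}\Bigl[\wit\beta_i(t)\,e^{-c_i^-/c_i^+}-\wit d_i(t)+\sum_{j\ne i}\wit a_{ij}(t)\,\tfrac{c_i^+}{c_j^+}\Bigr]\le 0,
\end{align*}
where the last inequality is \eqref{zona inv} and I used $\wit c_i\ge c_i^-$ to bound the Nicholson term by $(1/c_i^+)\,e^{-c_i^-/c_i^+}$; this argument transfers to the whole hull by continuity and density of translates. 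On $K$, $\tau$ satisfies the quasi\nbd monotone condition and hence admits the comparison principle of \cite[Theorem~5.1.1]{smit}, and it is $C^1$-concave since $y\,e^{-cy}$ is concave on $[0,1/c]$.

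Next I would show that every positive orbit with $\varphi(0)\gg 0$ enters $\W\times K$ in finite time, forcing the attractor $\A\subset \W\times\Int X_+$ of Theorem~\ref{teor:atractor cono pos Nich} to lie in $\W\times(K\cap\Int X_+)$. The natural approach is to parallel the proof of Theorem~\ref{teor:global atractor Ni}, comparing \eqref{nicholson delay hull} with a cooperative almost periodic ODE majorant, and to use \eqref{zona inv} together with the exponential dichotomy granted by \ref{a6} to pull the ultimately attracting bound down to $1/c_i^+$ in each component. I expect this absorption step to be the main technical hurdle, since the crude bound $1/(ec_i^-)$ on the global Nicholson maximum and the exponent $e^{-c_i^-/c_i^+}$ appearing in \eqref{zona inv} do not match directly when $c_i^-<c_i^+$, so a sharper iterative or bootstrap argument—refining the Nicholson-term bound once the orbit is known to enter the increasing regime $y\le 1/\wit c_i$—is required to conclude.

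Once $\A\subset \W\times (K\cap\Int X_+)$, the restricted semiflow is monotone, $C^1$-concave and uniformly persistent, and the general theory of monotone concave skew-product semiflows from \cite{nuos4}—specifically Case~A1 of \cite[Theorem~3.8]{nuos4}, invoked already in the proof of Theorem~\ref{teor:atractor cono pos Nich}—yields a unique minimal set $K^*=\{(\w,b(\w))\mid\w\in\W\}\subset \W\times(K\cap\Int X_+)$ which is a copy of the base flow and is exponentially attracting for every orbit in $\W\times(K\cap\Int X_+)$. Evaluating the continuous section $b$ along the orbit in $\W$ of the original coefficient function yields the unique positive almost periodic solution of \eqref{nicholson delay}, and the exponential attraction of every other positive solution with $\varphi(0)\gg 0$ follows by concatenating the finite-time absorption into $K$ with the exponential contraction to $K^*$.
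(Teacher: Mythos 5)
Your construction of the invariant box $[\bar 0,\bar\varphi]$ with $\bar\varphi_i\equiv 1/c_i^+$, the boundary computation showing positive invariance from~\eqref{zona inv}, and the observation that $\tau$ restricted there is monotone, concave, and $C^1$ so that \cite[Theorem~3.8]{nuos4} under uniform persistence yields a unique minimal set $K^*$ which is a copy of the base and exponentially attracts every semiorbit starting in $\W\times\bigl([\bar 0,\bar\varphi]\cap\Int X_+\bigr)$ — all of this matches the paper's proof.

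The gap is exactly the one you flag yourself: showing that semiorbits with arbitrary $\varphi\gg 0$ are eventually attracted to $K^*$. The ODE-majorant/absorbing-set strategy you propose cannot be pushed through without extra hypotheses: the crude bound coming from the maximum of $y\,e^{-c_i(\w)y}$ is of size $1/(e\,c_i^-)$ and there is no reason it should fall below $1/c_i^+$ when $c_i^-<c_i^+$, and no bootstrap based on condition~\eqref{zona inv} alone is offered that closes this. The paper avoids the quantitative absorption problem entirely by a different idea. It introduces a \emph{delay} majorant family~\eqref{comparar} obtained by capping the Nicholson nonlinearity at its maximum, i.e.\ replacing $y\,e^{-c_i(\w)y}$ by $h_i(\w,y)$ which equals $y\,e^{-c_i(\w)y}$ on $[0,1/c_i(\w)]$ and is constant equal to $1/(c_i(\w)e)$ beyond. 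This capped family is globally quasimonotone, concave, and $C^1$ on all of $\W\times X_+$, and it coincides with the Nicholson family on $\Omega\times[\bar 0,\bar\varphi]$, so $K^*$ is also a minimal set for the induced semiflow $\wit\tau$. Because $K^*$ already attracts all $\wit\tau$-orbits starting below it, \cite[Theorem~3.8]{nuos4} forces $K^*$ to be the \emph{only} minimal set for $\wit\tau$ and to attract every semiorbit in $\W\times\Int X_+$ for $\wit\tau$. Then the comparison principle \cite[Theorem~5.1.1]{smit}, valid because the capped family is quasimonotone, gives $0\le y_i(t,\w,\varphi)\le z_i(t,\w,\varphi)$, so the Nicholson attractor in $\W\times\Int X_+$ is squeezed into $[\bar 0,\bar\varphi]$ and must equal $K^*$. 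In short, the step you identify as ``the main technical hurdle'' is not resolved by sharpening the absorbing bound but by lifting the monotone-concave machinery to a globally quasimonotone capped system and comparing; you should replace your proposed absorption/bootstrap argument by this construction.
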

\begin{proof}
  The proof relies on the application of the general theory for monotone and
  concave skew-product semiflows developed in~\cite{nuos4}. Thus, we consider the
  family of systems over the hull~\eqref{nicholson delay hull} and the induced
  skew-product semiflow $\tau$. From condition~\eqref{zona inv}, it follows that for
  every $\w\in \W$ and every $i=1,\ldots, m$,
  \begin{equation*}
    - d_i(\w)+\des \sum_{j=1}^m  a_{ij}(\w)\,\frac{c_i^+}{c_j^+} + \beta_{i}(\w)\,e^{-c_i^-/c_i^+}\leq 0\,.
  \end{equation*}
  Then, it is easy to check that for the constant map $\bar \varphi$ in $X$ with value
  the vector
  \[
    \left(\frac{1}{c_1^+}, \ldots, \frac{1}{c_m^+} \right)\in \R^m\,,
  \]
  the region $\Omega\times [\bar{0},\bar\varphi]$ is positively invariant: just apply the
  criterion given in~\cite[Remark~5.2.1]{smit} for nonquasimonotone delay FDEs,
  bearing in mind the Nicholson nonlinear term. Actually, it is easy to check
  that the restriction of the semiflow to this positively invariant region is
  monotone, concave, and of class $C^1$ with respect to $\varphi$. Besides, recall that
  the persistence property of the initial system implies the uniform persistence
  of the semiflow in the interior of the positive cone. Then, if we fix
  $\w_0\in\W$ and $0\ll\phi_0\leq \bar\varphi$, the omega-limit set of the pair
  $(\w_0,\phi_0)$ is a strongly positive compact and positively invariant set, which
  thus contains a minimal set $K$ such that $0\ll K$ and $\phi\leq \bar\varphi$ for all
  $(\w,\phi)\in K$. Due to the uniform persistence, \cite[Theorem~3.8]{nuos4} implies
  that $K$ is the only strongly positive minimal set for
  $\tau\mid_{\Omega\times [\bar{0},\bar\varphi]}$, it is a copy of the base, namely,
  $K=\{(\w,b(\w))\mid \w\in\ \W\}$ for a continuous map $b:\Omega\to \Int X_+$, and it
  exponentially attracts every other semiorbit for $\w\in\W$ and
  $0\ll\varphi\leq \bar\varphi$, that is, $\lim_{t\to\infty} \|y_t(\w,\varphi)-b(\w{\cdot}t)\|_\infty=0$ exponentially fast.

  Let us now prove that the semiorbit $\tau(t,\w,\varphi)$ of each $\w\in\W$ and
  $\varphi \gg 0$ is attracted by $K$ too. In order to check it, we introduce a majorant
  family of systems which satisfy the quasimonotone condition, are concave, and
  of class $C^1$ with respect to $\varphi$. More precisely, for each
  $1\leq i\leq m$, we consider the map $h_i:\W\times [0,\infty)\to [0,\infty)$ defined by
  \[
    h_i(\w,y)= \left\{
      \begin{array}{ll}
        y\,e^{-c_i(\w)\,y} &\text{if }\; 0\leq y\leq \frac{1}{c_i(\w)}\,,  \\[0,2cm]
        \frac{1}{c_i(\w)\,e} &\text{if }\; y\geq \frac{1}{c_i(\w)} \,,
      \end{array} \right.
  \]
  together with the family of delayed nonlinear systems given for each
  $\w\in \W$ by
  \begin{equation}\label{comparar}
    y'_i(t)=- d_i(\w{\cdot}t)\,y_i(t) +\des \sum_{j=1}^m a_{ij}(\w{\cdot}t)\,y_j(t) + \beta_{i}(\w{\cdot}t)\,h_i(\w{\cdot}t,y_i(t-r_i))\,,
  \end{equation}
  for $i=1,\ldots,m$, where the coefficients are just those of~\eqref{nicholson delay
    hull}. Let $\wit\tau:\R^+\times\W\times X_+\to \W\times X_+$,
  $(t,\w,\varphi)\mapsto (\w{\cdot}t,z_t(\w,\varphi))$ denote the induced skew-product semiflow, where
  $z(t,\w,\varphi)$ is the solution of system~\eqref{comparar} with initial value
  $\varphi\in X_+$. This semiflow turns out to be monotone, concave, and of class
  $C^1$ in $\varphi$. Besides, $K\gg 0$ is also a minimal set for $\wit\tau$, because the
  systems coincide when restricted to $\Omega\times [\bar{0},\bar\varphi]$. Then, in particular
  $\wit\tau$ is globally defined (see~\cite[Proposition~3.6]{nuos4}).  Also, the fact
  that $K$ attracts all the semiorbits starting below it implies that $K$ is the
  only minimal set for $\wit\tau$, and thus attracts all the solutions in the
  interior of the positive cone (see~\cite[Theorem~3.8]{nuos4}), that is, for each
  $\w\in\W$ and $\varphi \gg 0$, $\|z_t(\w,\varphi)-b(\w{\cdot}t)\|_\infty\to 0$ as
  $t\to \infty$ exponentially fast. In other words, there is a global attractor for
  $\wit\tau$ in the interior of the positive cone given by the set
  $K\subset [\bar{0},\bar\varphi]$. Now, as systems~\eqref{comparar} satisfy the quasimonotone
  condition, we can apply a standard argument of comparison of solutions
  (see~\cite[Theorem~5.1.1]{smit}) to get that
  $0\leq y_i(t,\w,\varphi)\leq z_i(t,\w,\varphi)$ for all $\w\in\W$,
  $\varphi\in X_+$ and $t\geq 0$. Hence, it is easy to deduce that also the attractor in
  $\W\times \Int X_+$ for the Nicholson systems is in $[\bar{0},\bar\varphi]$, and thus it
  must be $K$, as desired.

  Finally, when we take $\w_1$ as the element in the hull giving the initial
  system~\eqref{nicholson delay}, we get the positive almost periodic solution
  $b(\w_1{\cdot}t)$ attracting every other solution $y_t(\w_1,\varphi)$ with
  $\varphi\gg 0$. Moreover, if $\varphi\geq 0$ with $\varphi(0)\gg 0$, then
  $y(t,\w_1,\varphi)\gg 0$ for all $t\geq 0$, so that we just need to move forwards in time
  and apply the semicocycle property to obtain the attraction result for these
  initial data. The proof is finished.
\end{proof}
\begin{remark}
  Some other systems in the literature with a similar structure can also be
  treated in the same fashion.  For instance, similar results can be stated for
  useful almost periodic population models which are written as
  \begin{equation*}
    y'_i(t)=- \wit d_i(t)\,y_i(t) +\des \sum_{j=1}^m \wit a_{ij}(t)\,y_j(t) + \wit\beta_{i}(t)\,h_i(t,y_i(t-r_{i}))\,,
  \end{equation*}
  for $i=1,\ldots,m$, with assumption~\ref{a6} on the linear part of the systems, and
  where the nonlinearities are of the form
  \[
    h_i(t,y)=\frac{y}{1+\wit c_i(t)\,y^\alpha}\quad (\alpha \geq 1)\,,\quad t\in \R\,,\,y\in \R_+.
  \]
  See~\cite{obsa18} for more details on the structure of these systems from an
  analytical point of view. For example, the scalar model for the process of
  hematopoiesis for a population of mature circulating cells in~\cite{magl} falls within
  this class.
\end{remark}

\section{Numerical simulations}\label{sec-numerical}

The aim of this section is twofold. First, we will illustrate the results
presented in Section~\ref{sec-nicholson} and compare their applicability with
those in the literature. We will then explore the behaviour of the omega-limit
sets of Nicholson equations when their coefficients undergo certain
variations.\par

Let $\mathbb{T}^2=\left(\mathbb{R}/[0,2\,\pi]\right)^2$ be the two-dimensional torus endowed with the
Kronecker flow $\sigma:\mathbb{R}\times\mathbb{T}^2\to\mathbb{T}^2$,
$(t,\theta_1,\theta_2)\mapsto\sigma_t(\theta_1,\theta_2)=(\theta_1+t,\theta_2+\sqrt{2}\,t)$ (mod
$2\,\pi$). As in the previous sections, we simply write
$\sigma_t(\theta)=\theta{\cdot}t$ for each
$\theta=(\theta_1,\theta_2)\in\mathbb{T}^2$. This flow is minimal because $1$ and
$\sqrt{2}$ are linearly independent over $\Q$.

Let us consider the family of two-dimensional quasi-periodic Nicholson systems
given for each $\theta=(\theta_1,\theta_1)\in \mathbb{T}^2$ by
\begin{equation}\label{eq:numerical}
  \begin{aligned}
    y'_1(t)=&\,- d_1(\theta{\cdot}t)\,y_1(t) + a_{12}(\theta{\cdot}t)\,y_2(t)
    + \beta_{1}(\theta{\cdot}t)\,y_1(t-1)\,e^{-c_1(\theta{\cdot}t)\,y_1(t-1)}\,,\\
    y'_2(t)=&\,- d_2(\theta{\cdot}t)\,y_2(t) + a_{21}(\theta{\cdot}t)\,y_1(t) +
    \beta_{2}(\theta{\cdot}t)\,y_2(t-2)\,e^{-c_2(\theta{\cdot}t)\,y_2(t-2)}\,,
  \end{aligned}
\end{equation}
for $t\geq 0$, determined by the continuous coefficients defined for each $t\in \R$ by
\[
  \begin{aligned}
    c_1(\theta{\cdot}t)=\,& 1, \quad c_2(\theta{\cdot}t)= 0.5 +
    0.2\,p(\theta_1+t)+0.01\,q(\theta_2+\sqrt{2}\,t),\\
    a_{12}(\theta{\cdot}t)=\,& \alpha_{12}\, (0.1 +
    0.03\,p(\theta_1+t)+0.01\,q(\theta_2+\sqrt{2}\,t)), \\
    a_{21}(\theta{\cdot}t)=\,& \alpha_{21}\, (1 +
    0.03\,p(\theta_1+t)+0.01\,q(\theta_2+\sqrt{2}\,t)),\\
    m_1(\theta{\cdot}t)=\,& 1.2,\quad d_1(\theta{\cdot}t)= m_1(\theta{\cdot}t)+a_{21}(\theta{\cdot}t), \\
    m_2(\theta{\cdot}t)=\,& \mu\,(1.9 + 0.02\,p(\theta_1+t)), \quad d_2(\theta{\cdot}t)= m_2(\theta{\cdot}t)+a_{12}(\theta{\cdot}t),\\
    \beta_1(\theta{\cdot}t)=\,& 5 +
    0.03\,p(\theta_1+t)+0.01\,q(\theta_2+\sqrt{2}\,t), \\
    \beta_2(\theta{\cdot}t)=\,& 1 +
    0.03\,p(\theta_1+t)+0.01\,q(\theta_2+\sqrt{2}\,t),
  \end{aligned}
\]
where $\mu$, $\alpha_{12}$, and $\alpha_{21}$ are positive real numbers, and
$p,q\in C(\R,\R)$ are $2\,\pi$-periodic functions (or, equivalently, continuous maps
on the torus $\mathbb{T}$). Recall that quasi-periodic maps are a relevant class within
the set of almost periodic maps.  For each $\theta\in \mathbb{T}^2$, system~\eqref{eq:numerical} may
be seen as a quasi-periodic perturbation of an autonomous Nicholson
system. Notice that $m_1$ and $m_2$ represent the mortality rates.\par

Fix $p(t)=\sin(t)$ and $q(t)=\cos(t)$, $t\in\mathbb{R}$. It is straightforward to check that
conditions~\ref{a1}--\ref{a6} are satisfied for the system for $\theta=(0,0)$, which can be
considered as the initial Nicholson system \eqref{nicholson delay}. Moreover, it can
be checked that condition~\eqref{zona inv} holds as well for $\mu=1$ and
$\alpha_{12}=\alpha_{21}\in\{0.8,1,1.2\}$. The bound given in~\cite[Theorem~3.4]{faria21} fails
for the vector with positive components $(1/c_1^+,1/c_2^+)$ used in the proof of
Theorem~\ref{teor-atractor-cooperativo} (see
Figure~\ref{fig:condition_vs_faria_1_2}). We also performed a parameter sweep over
the grid
\[
  \{0.01\,k\mid k=0,1,\ldots,10000\}\times \{0.01\,k\mid k=0,1,\ldots,10000\}
\]
which seems to indicate that the bound given in~\cite[Theorem~3.4]{faria21} fails
for all the vectors with positive components.
\par

\begin{figure}[h]
  \centering
  \begin{subfigure}[t]{0.45\linewidth}
    \centering \includegraphics[width=0.9\linewidth]{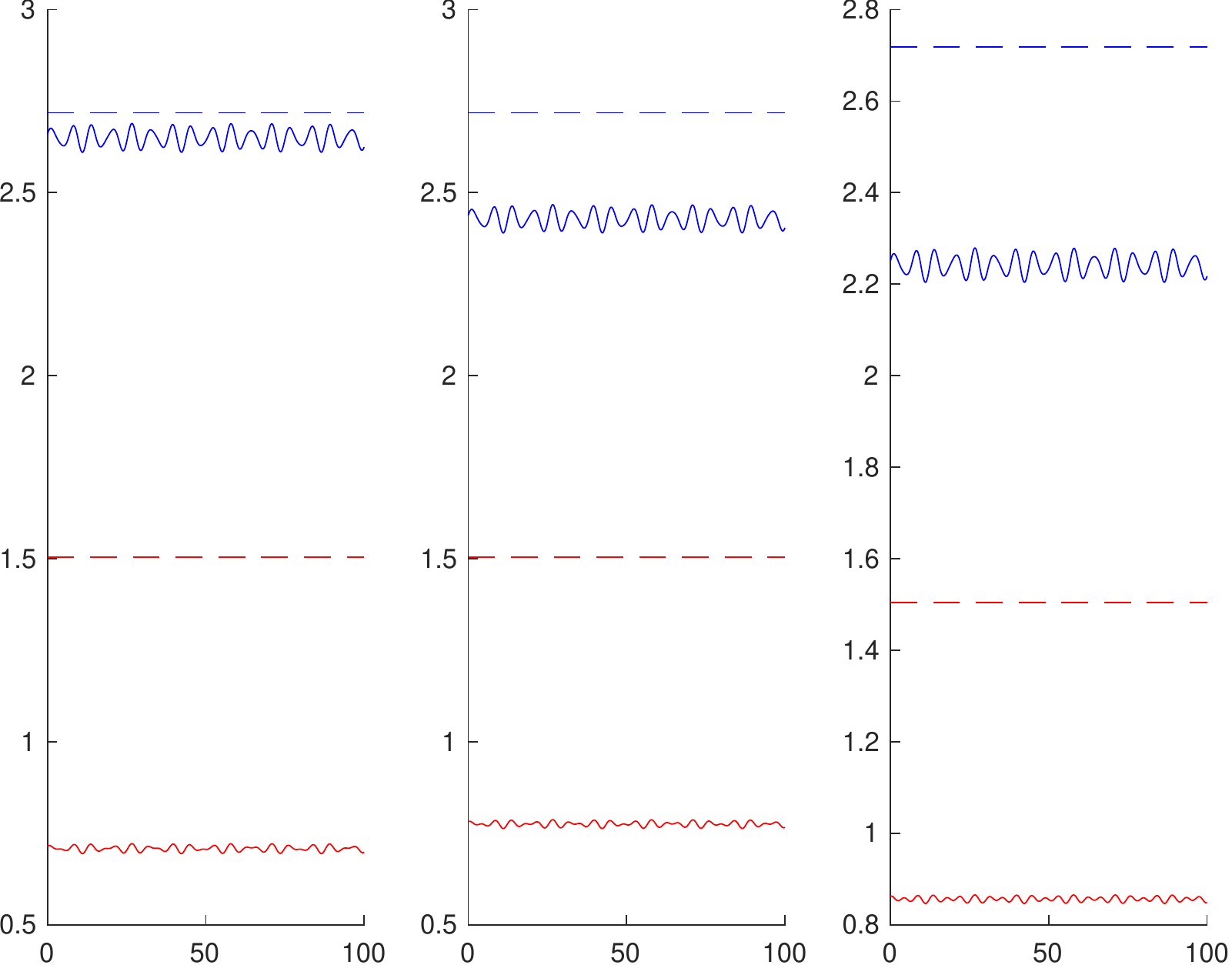}
    \caption{Upper bound (dashed) and middle expression (continuous) of~\eqref{zona inv}.}
  \end{subfigure}
  \hfill
  \begin{subfigure}[t]{0.45\linewidth}
    \centering \includegraphics[width=0.9\linewidth]{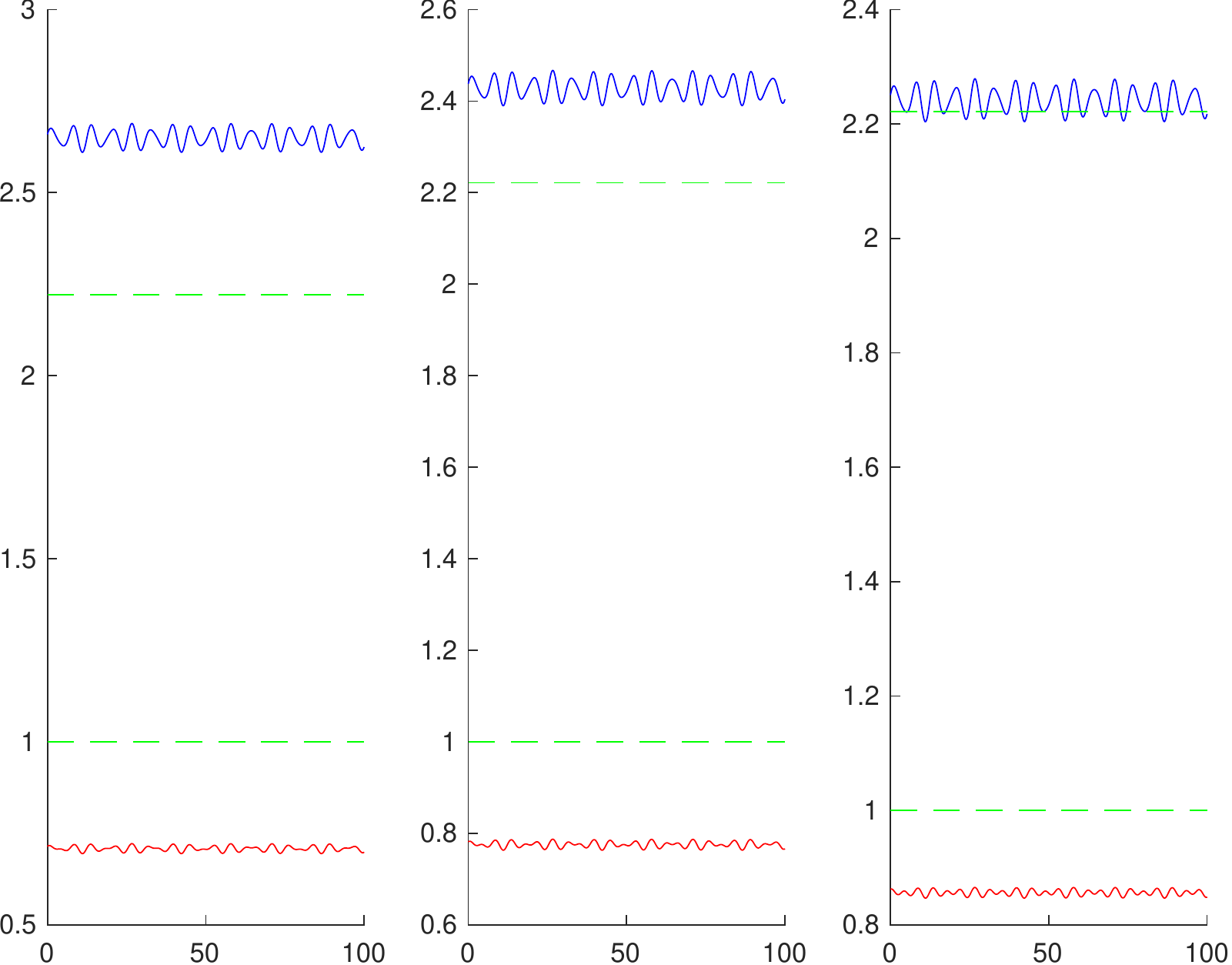}
    \caption{Upper and lower bounds (dashed) and middle expression
      (continuous) of (3.10) in~\cite{faria21} for $v=(1/c_1^+,1/c_2^+)$ .}
  \end{subfigure}
  \caption{
 In both cases,
    $\mu=1$, $p(t)=\sin(t)$, $q(t)=\cos(t)$, and
    $\alpha_{12}=\alpha_{21}=0.8,1,1.2$, resp. The first component is blue and
    the second one is red.}
  \label{fig:condition_vs_faria_1_2}
\end{figure}


In order to apply Theorem~\ref{teor-atractor-cooperativo}, it remains to check that
the quasi-periodic Nicholson system~\eqref{eq:numerical} for $\theta=(0,0)$ is uniformly
persistent at $0$. Assuming the notation of Theorem~\ref{teor Nicholson system}, we
have $\bar A=[\bar A_{11}]$, that is, an irreducible matrix of dimension 2. As a
result, it suffices to check that the Lyapunov exponent $\widetilde \lambda_1>0$.\par

We are in a position to apply an adaptation of the techniques introduced
in~Cal\-za\-da et al.~\cite{caos} to compute $\widetilde \lambda_1$. Specifically,
we will use some appropriate methods to perform the numerical integration of the
delay linear system
\begin{equation}\label{eq:linearized}
  \begin{aligned}
    y'_1(t)=&\,- d_1(\theta{\cdot}t)\,y_1(t) + a_{12}(\theta{\cdot}t)\,y_2(t)
    + \beta_{1}(\theta{\cdot}t)\,y_1(t-1)\,,\\
    y'_2(t)=&\,- d_2(\theta{\cdot}t)\,y_2(t) + a_{21}(\theta{\cdot}t)\,y_1(t) + \beta_{2}(\theta{\cdot}t)\,y_2(t-2)\,
  \end{aligned}
\end{equation}
for $\theta=(0,0)$. Note  that the appropriate state space for this problem is
$X=C([-1,0])\times C([-2,0])$. As suggested by Theorem~\ref{teor Nicholson system}, we
take $\bar 1$ as the initial map, the map in $X$ with both components identically equal to $1$.

A first approach is given by Matlab's code {\itshape dde23} (see Shampine and
Thomp\-son~\cite{shth}), which relies on an explicit Runge-Kutta (2,3) pair of
Bogacki and Sham\-pi\-ne~\cite{bosh}. The results of that
integration present an evident numerical instability, as seen on the left-hand
side of Figure~\ref{fig:instability}. In order to circumvent this issue, the
Gauss-Legendre method of order four for delay equations was considered.  Its
implementation was validated against both the symbolic solution and the
numerical approximation given by Matlab's {\itshape dde23} of the unperturbed
system~\eqref{eq:linearized} with parameters $\mu=\alpha_{12}=\alpha_{21}=1$,
$p=q\equiv 0$. The Gauss-Legendre method is an implicit Runge-Kutta
method with two stages and Butcher tableau
\[
  \begin{array}{c|cc}
    \frac{1}{2} - \frac{1}{6} \sqrt{3} & \frac{1}{4}                  & \frac{1}{4} - \frac{1}{6} \sqrt{3} \\[0.2\normalbaselineskip]
    \frac{1}{2} + \frac{1}{6} \sqrt{3} & \frac{1}{4} + \frac{1}{6} \sqrt{3} & \frac{1}{4} \\[0.2\normalbaselineskip]
    \hline \rule{0pt}{1.2\normalbaselineskip}
                                       & \frac{1}{2}                  & \frac{1}{2}
  \end{array}
\]
\begin{figure}[h]
  \centering \includegraphics[width=0.45\textwidth]{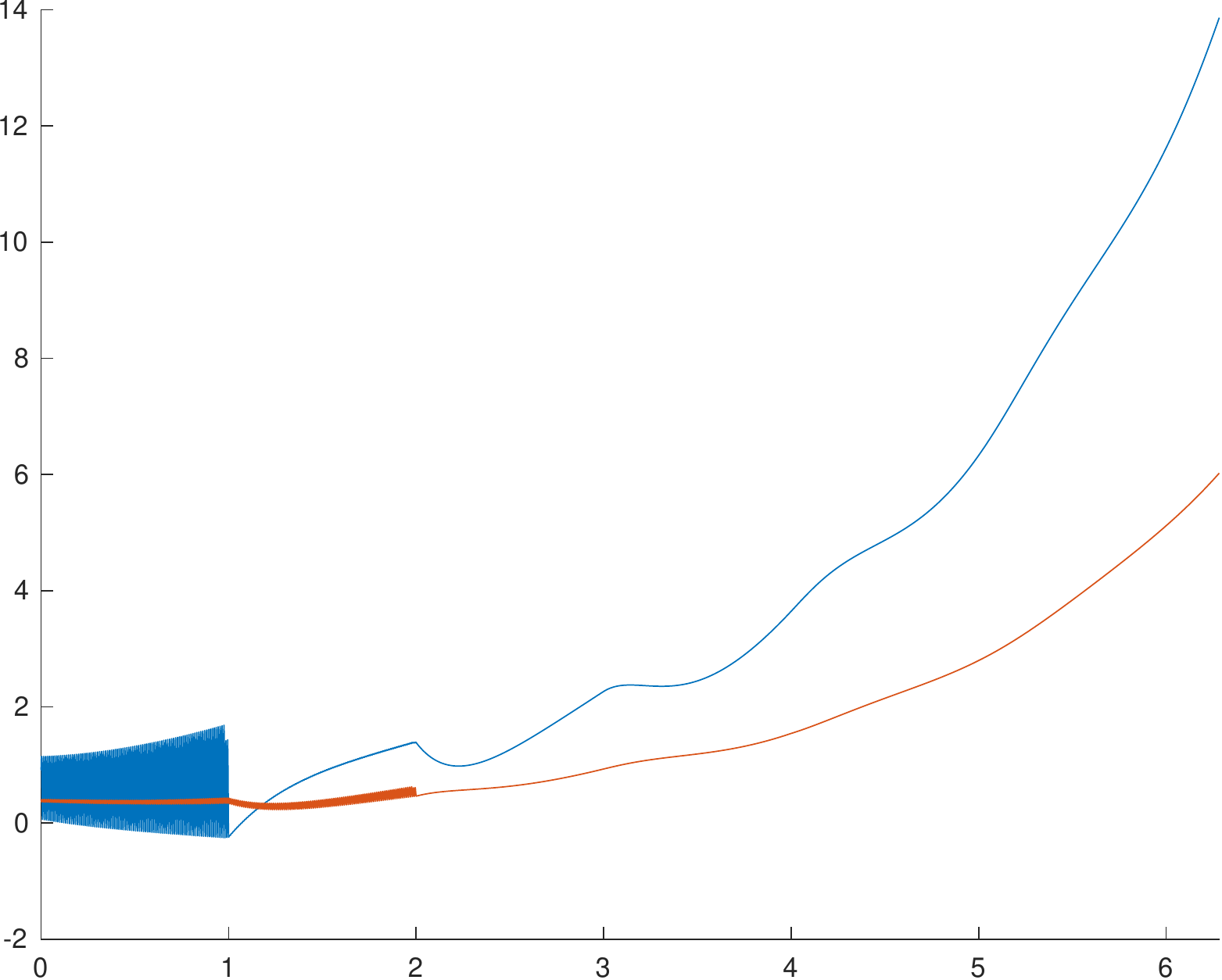} \hfill
  \includegraphics[width=0.45\textwidth]{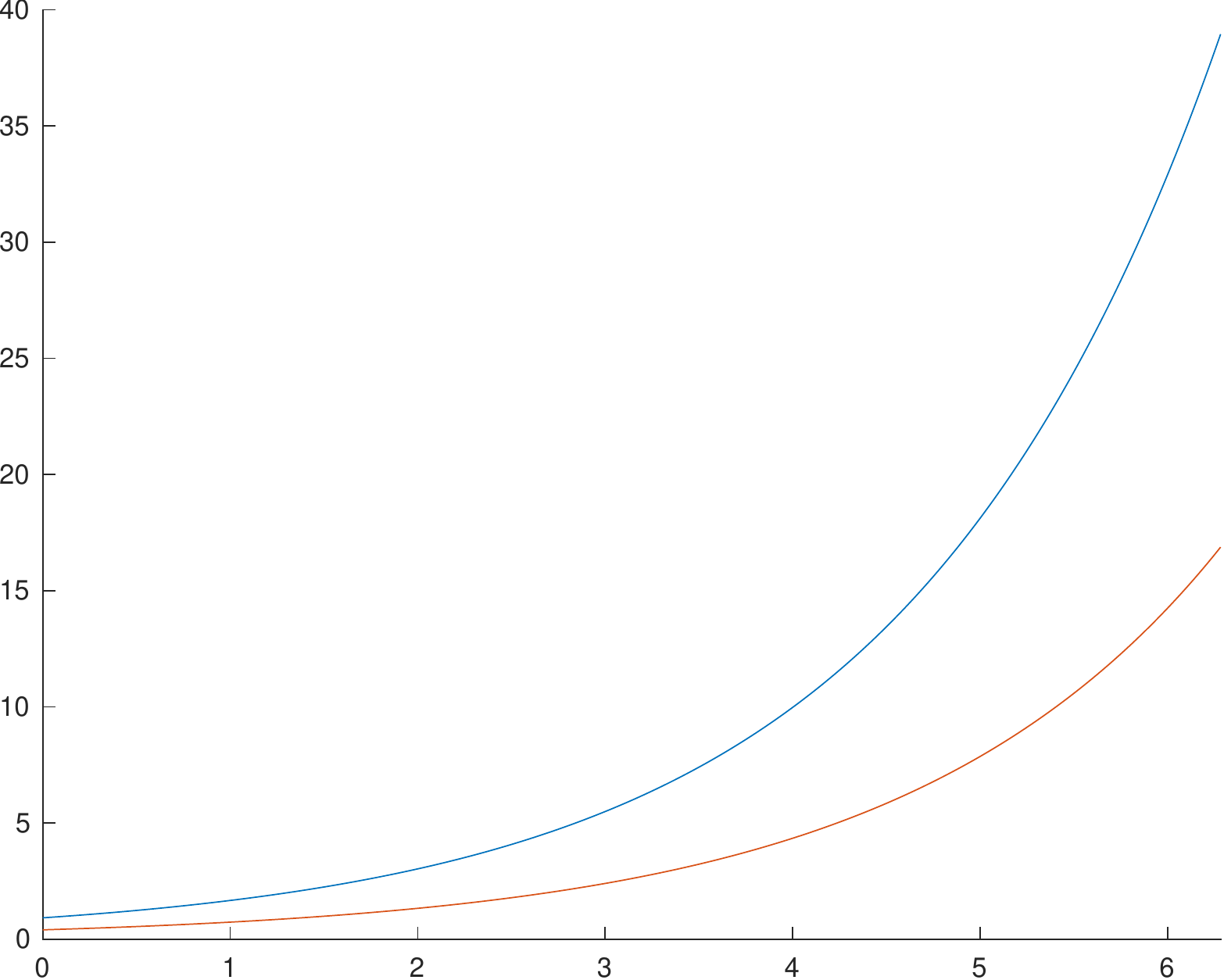}
  \caption{Results of the numerical integration of system~\eqref{eq:linearized}
    on $[-20,2\,\pi]$ with Matlab's {\itshape dde23} (left) and with the
    implicit Gauss-Legendre method with two stages for delay equations (right). The first component is blue and
    the second one is red.}
  \label{fig:instability}
\end{figure}
It is noteworthy that this method has order four and is A-stable as a
consequence of the Wanner-Hairer-N{\o}rsett Theorem (see,
e.g., Iserles~\cite{iser}). The results of the integration of
system~\eqref{eq:linearized} leading to the computation of the required Lyapunov
exponent by the Gauss-Legendre method show no instabilities, as seen on the
right-hand side of Figure~\ref{fig:instability}. Therefore, the techniques
in~\cite{caos} can be applied to conclude that the approximate value of
$\widetilde\lambda_1$ is 0.597, which is positive. Finally, an application of
Theorem~\ref{teor Nicholson system} yields the uniform persistence at $0$ of
system~\eqref{eq:numerical} for $\theta=(0,0)$, as desired. \par

As a consequence, Theorem~\ref{teor:atractor cono pos Nich} implies that the
skew-product semiflow $\tau$ defined by the family~\eqref{eq:numerical},
$\theta\in \mathbb{T}^2$ has a global attractor $K$ in
$\mathbb{T}^2\times \Int X_+$.  Furthermore, by Theorem~\ref{teor-atractor-cooperativo} (see also
its proof), the global attractor $K$ is a copy of the base, that is, there
exists a continuous map $b:\mathbb{T}^2\to\Int X_+$ such that
$K=\{(\theta_1,\theta_2,b(\theta_1,\theta_2))\mid (\theta_1,\theta_2)\in\mathbb{T}^2\}$. The implication in terms of solutions is
that for each $\theta\in \mathbb{T}^2$ there exists a unique positive quasi-periodic solution
of~\eqref{eq:numerical} which attracts every other positive solution at an
exponential rate. This allows us to compute the global attractor $K$, having in
mind that
$\{K(\theta)\}_{\theta\in \mathbb{T}^2}=\{b(\theta)\}_{\theta\in \mathbb{T}^2}$ is the pullback attractor of the semiflow
(see~\eqref{eq:pullback}).\par

Note that the graphs of the components of the map $\mathbb{T}^2\to\mathbb{R}^2$,
$(\theta_1,\theta_2)\mapsto b(\theta_1,\theta_2)(0)$ determine two copies of
$\mathbb{T}^2$. Fix $(\theta_1,\theta_2)\in\mathbb{T}^2$. Then
$b(\theta_1,\theta_2)(0)=\lim_{t\to
  \infty}y(t,\sigma_{-t}(\theta_1,\theta_2),\bar{1})$ and the limit converges
exponentially fast. As a result, we can divide the 2-torus $\mathbb{T}^2$ into a
uniform grid $\{(\theta_1^i,\theta_2^j)\mid i,j=1,\ldots,16\}$ and fix a
tolerance $10^{-6}$. Therefore, we compute
$y^{ij}=y(T,\sigma_{-T}(\theta_1^i,\theta_2^j),\bar{1})$, for each $i,j=1,\ldots,16$, where $T>0$ is such
that the distance between $y^{ij}$ and
$y(T-10,\sigma_{-(T-10)}(\theta_1^i,\theta_2^j),\bar{1})$ is under the
tolerance. This procedure yields an approximation of both copies of
$\mathbb{T}^2$, as shown in Figure~\ref{fig:omega}.\par
\begin{figure}[h]
  \centering \includegraphics[width=0.45\textwidth]{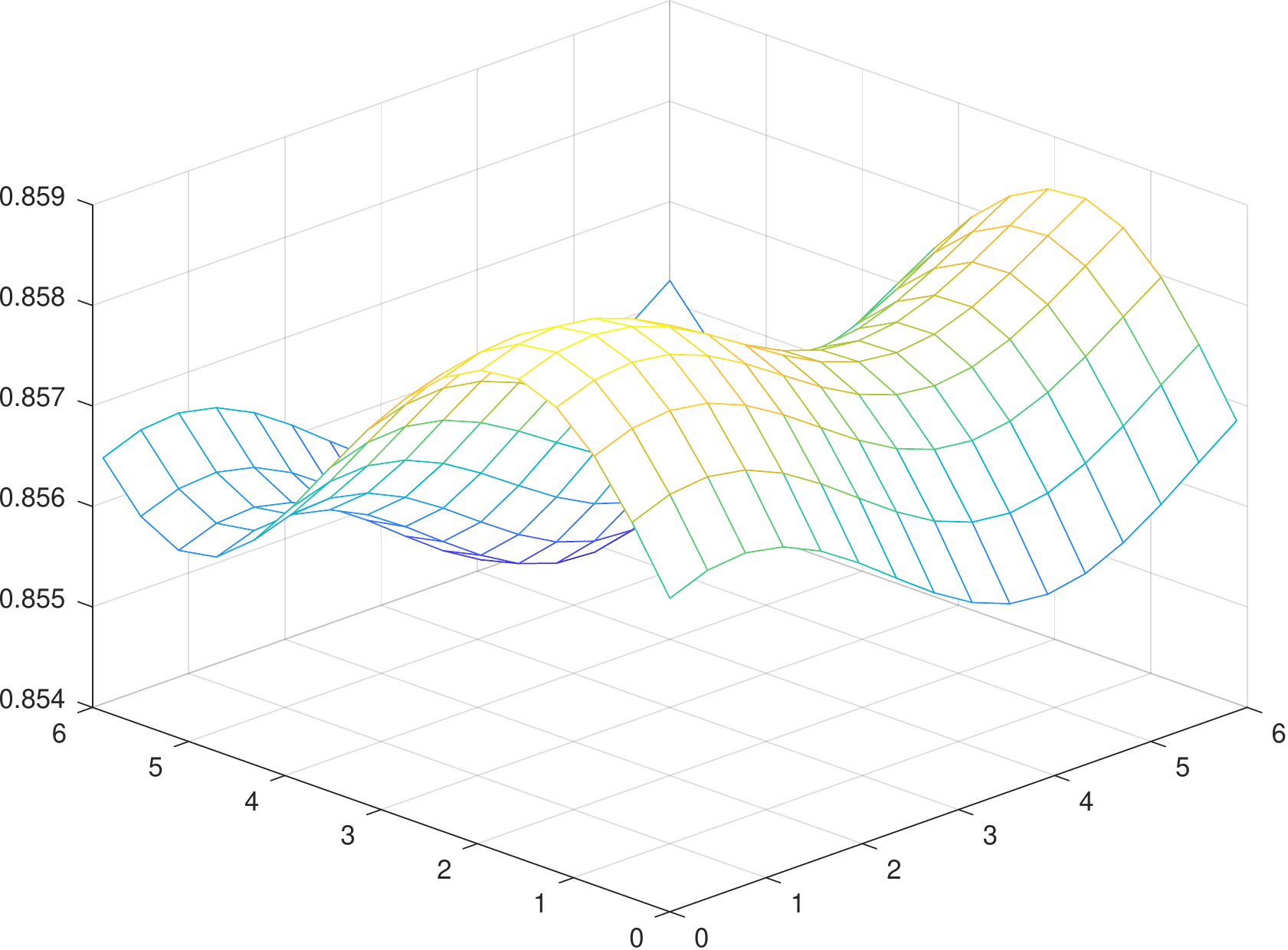} \hfill
  \includegraphics[width=0.45\textwidth]{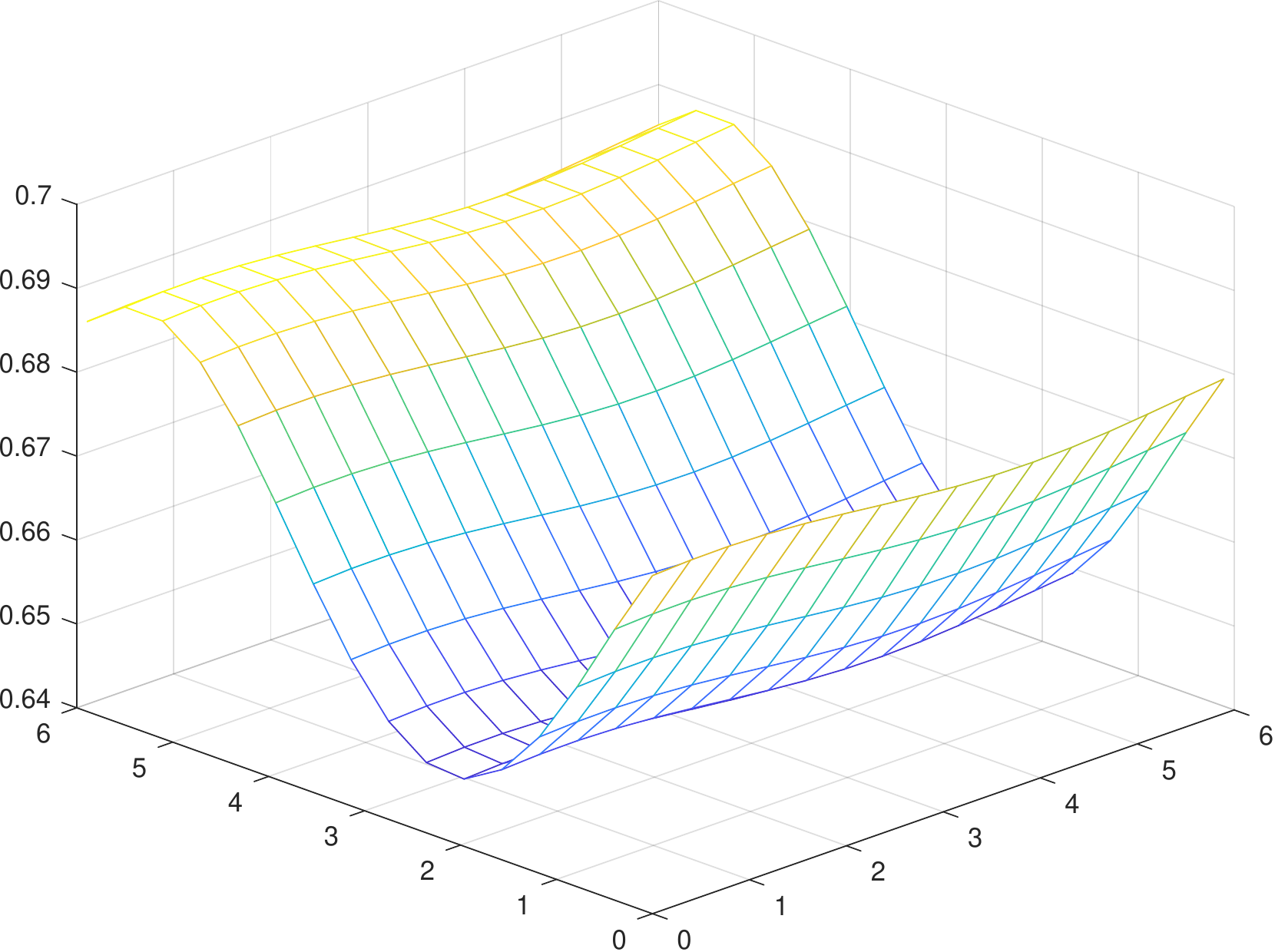}
  \caption{Mesh of points $(\theta_1^i,\theta_2^j,y_1^{ij})$ on the left and
    $(\theta_1^i,\theta_2^j,y_2^{ij})$ on the right, $i,j=1,\ldots,16$, for the
    parameters $\mu=1$, $p(t)=\sin(t)$, $q(t)=\cos(t)$, and
    $\alpha_{12}=\alpha_{21}=1$.}
  \label{fig:omega}
\end{figure}

By repeating the procedure above for the parameters $\mu=1$,
$p(t)=\sin(t)$, $q(t)=\cos(t)$, and
$\alpha_{12}=\alpha_{21}=0.8,1,1.2$, we can see that both components of the
global pullback attractor vary monotonically, either increasingly or
decreasingly, when both migration rates undergo similar variations (see
Figure~\ref{fig:variation_a12_a21}).\par
\begin{figure}[h]
  \centering
  \begin{subfigure}[t]{0.23\linewidth}
    \includegraphics[width=\linewidth]{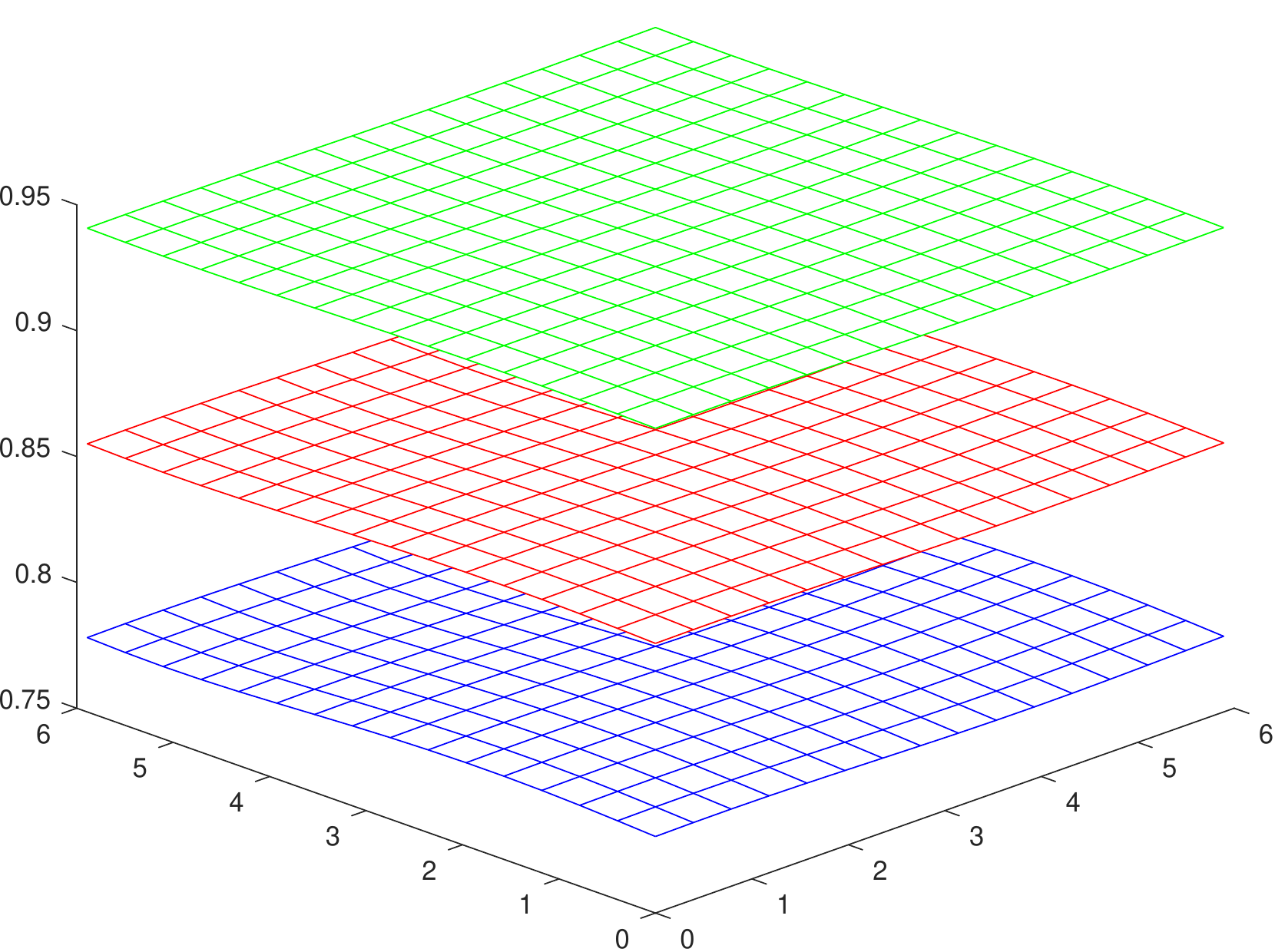}
    \caption{1st component.}
    \label{fig:variation_a12_a21_comp1}
  \end{subfigure}
  \hfill
  \begin{subfigure}[t]{0.23\linewidth}
    \includegraphics[width=\linewidth]{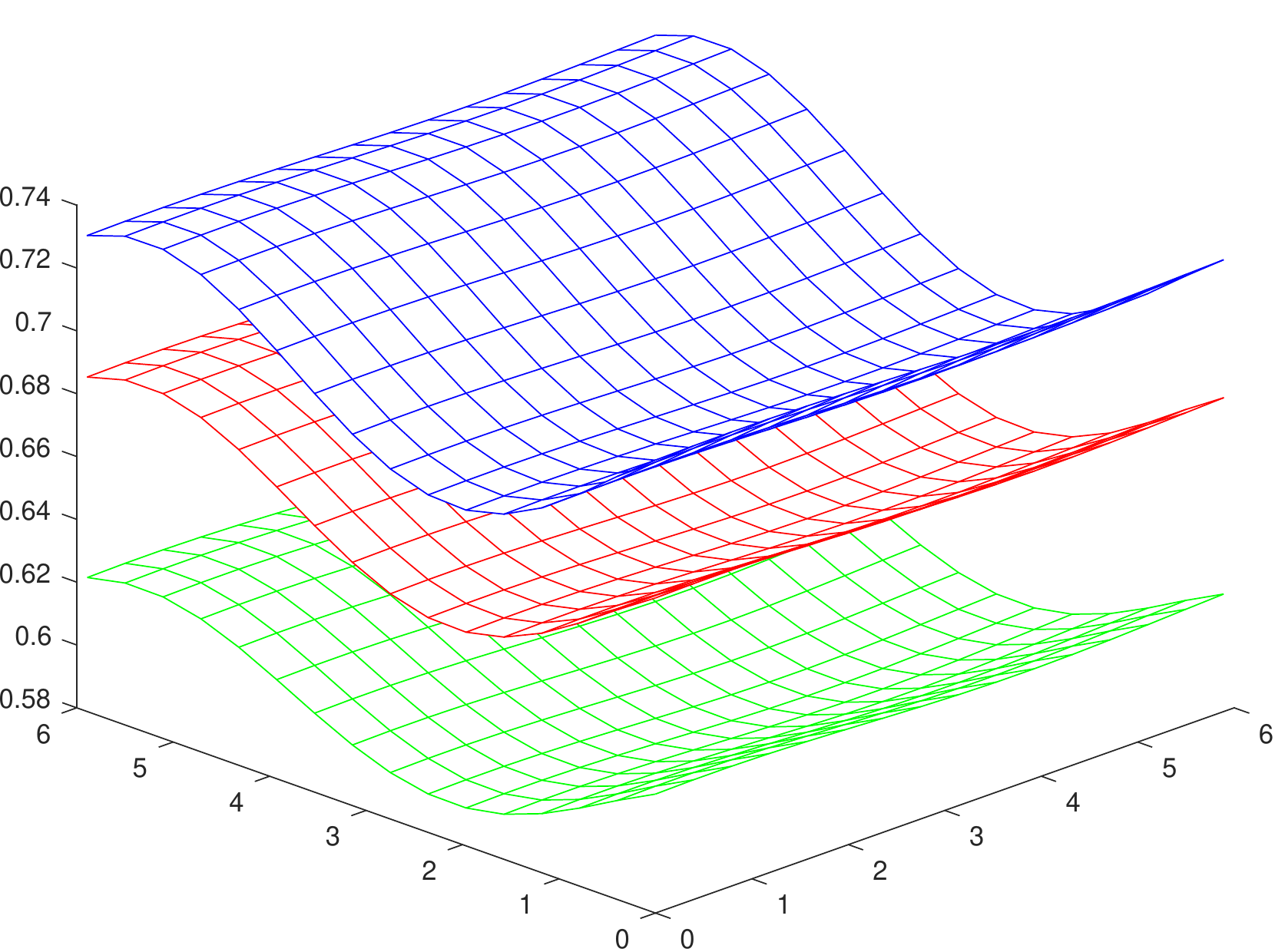}
    \caption{2nd component.}
    \label{fig:variation_a12_a21_comp2}
  \end{subfigure}
  \hfill
  \begin{subfigure}[t]{0.23\linewidth}
    \includegraphics[width=\linewidth]{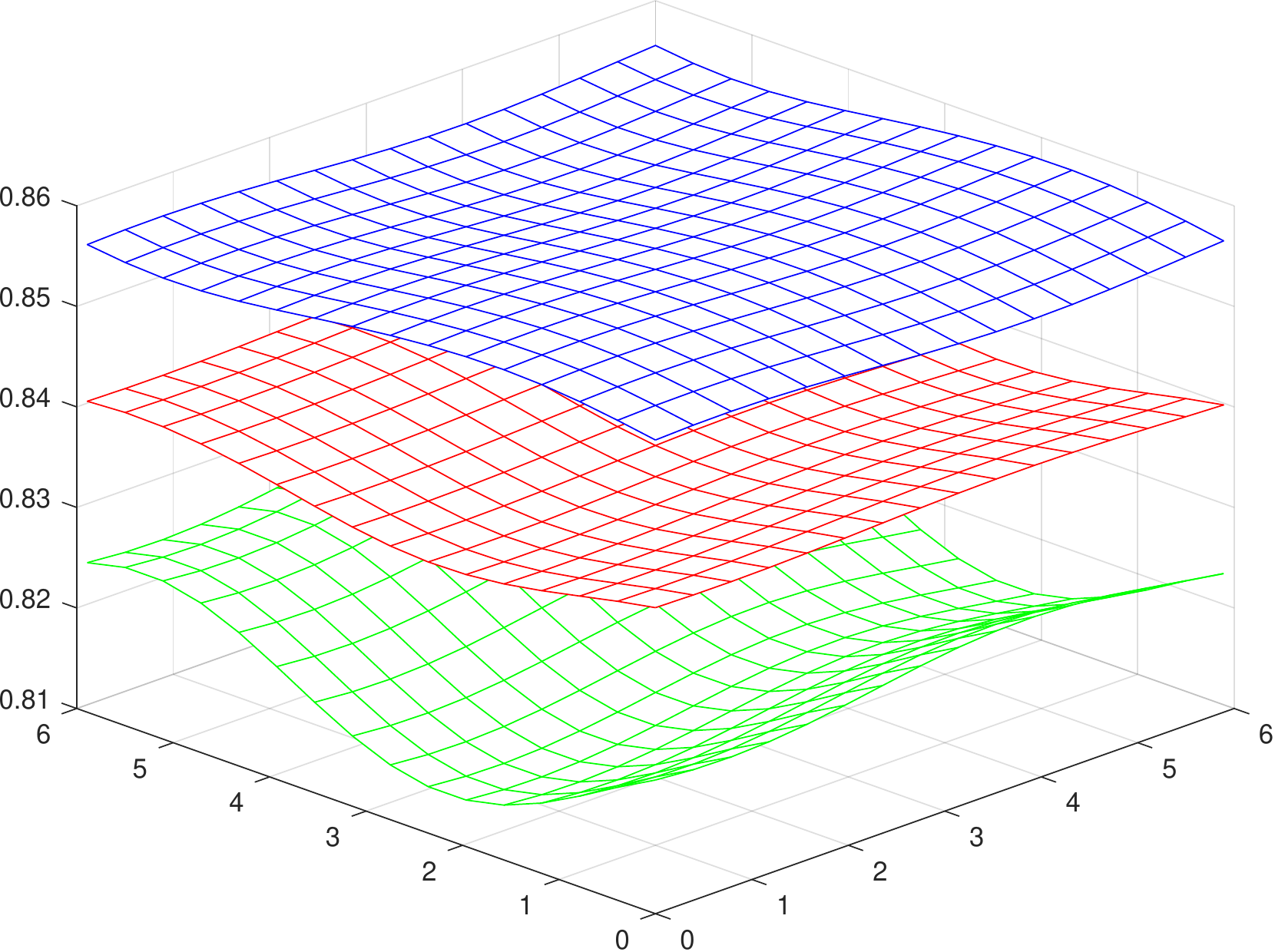}
    \caption{1st component.}
    \label{fig:variation_a12_comp1}
  \end{subfigure}
  \hfill
  \begin{subfigure}[t]{0.23\linewidth}
    \includegraphics[width=\linewidth]{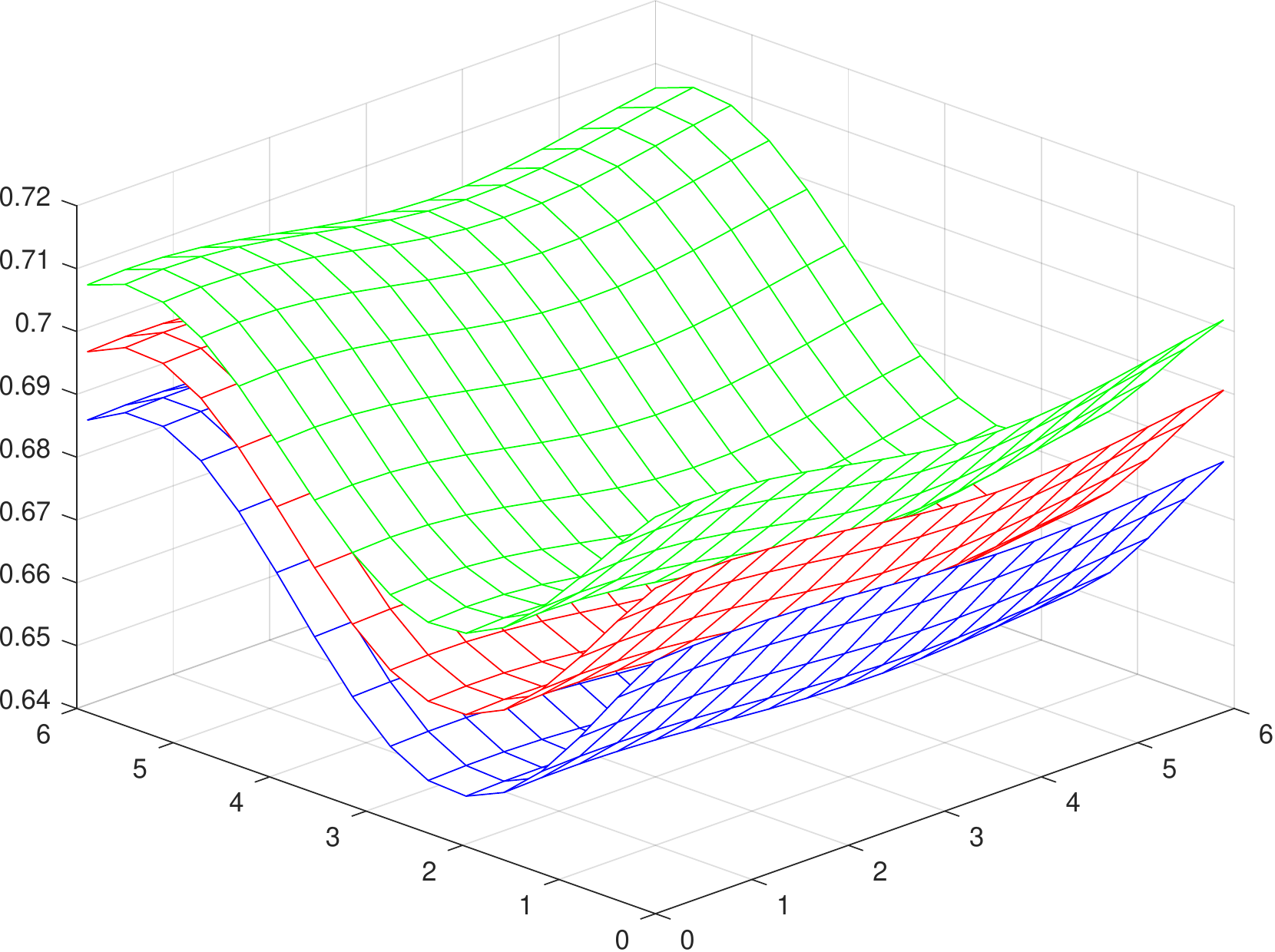}
    \caption{2nd component.}
    \label{fig:variation_a12_comp2}
  \end{subfigure}
  \caption{Variation of the attractor for the parameters $\mu=1$,
    $p(t)=\sin(t)$,
    $q(t)=\cos(t)$. Plots~\ref{fig:variation_a12_a21_comp1}
    and~\ref{fig:variation_a12_a21_comp2}: $\alpha_{12}=\alpha_{21}=0.8$
    (green), $1$ (red), $1.2$ (blue). Plots~\ref{fig:variation_a12_comp1}
    and~\ref{fig:variation_a12_comp2}: $\alpha_{12}=0.01$ (green), $0.5$ (red),
    $1$ (blue).}
  \label{fig:variation_a12_a21}
\end{figure}
If, on the other hand, only one of the migration rates is modified, the
components of the global pullback attractor still vary monotonically, but their
increasing and decreasing characters are reversed (see
Figure~\ref{fig:variation_a12_a21} again). Notice that, in this case, we are
considering the parameters $\mu=1$, $p(t)=\sin(t)$, $q(t)=\cos(t)$,
$\alpha_{21}=1$, and $\alpha_{12}=0.01,0.5,1$, for which it is easy to check
that hypotheses~\ref{a1}--\ref{a6} and condition~\eqref{zona inv} hold.\par

\begin{figure}[h]
  \centering
  \begin{subfigure}[t]{0.23\linewidth}
    \includegraphics[width=\linewidth]{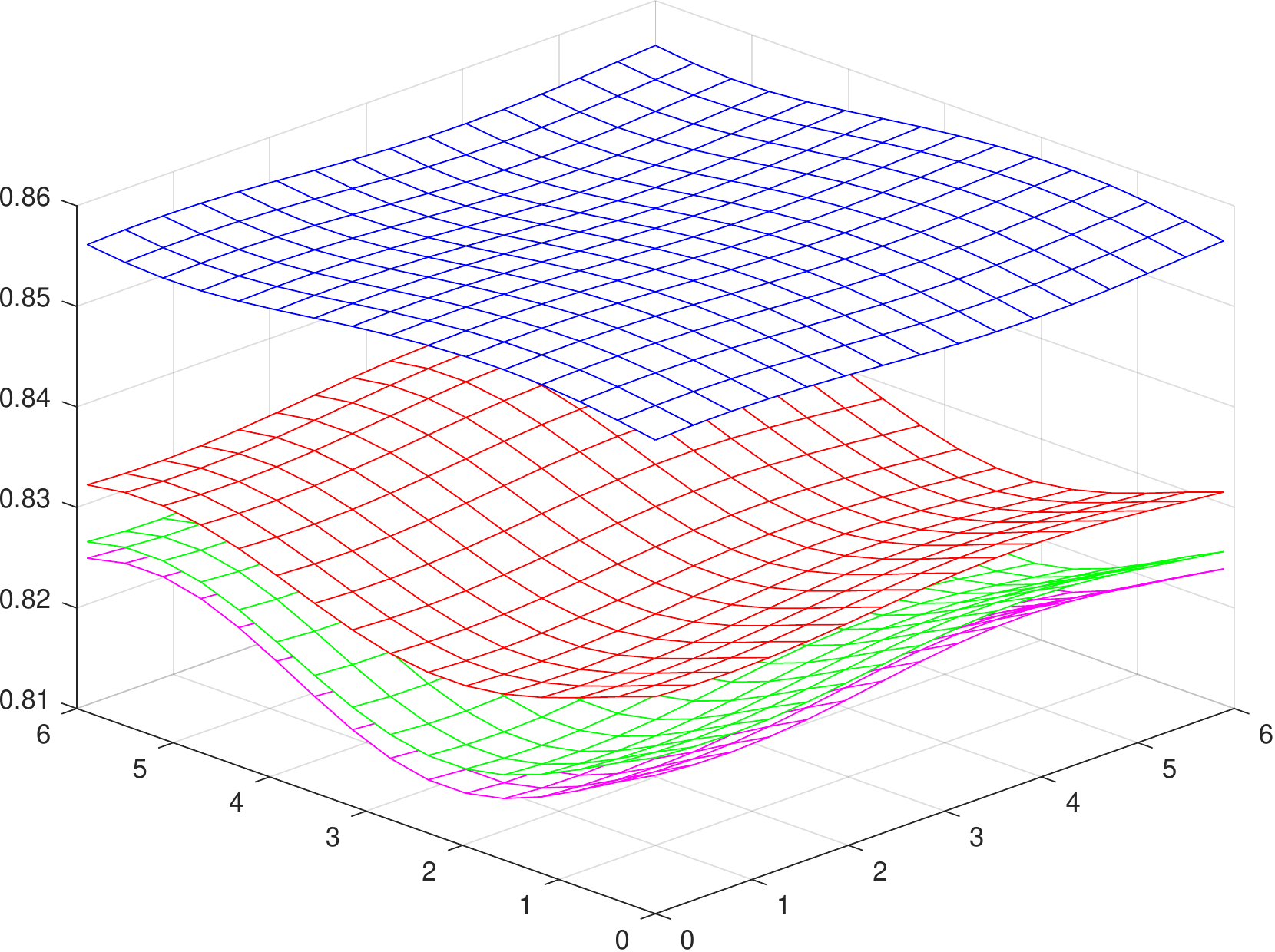}
    \caption{1st component.}
    \label{fig:variation_mortality_increasing_comp1}
  \end{subfigure}
  \hfill
  \begin{subfigure}[t]{0.23\linewidth}
    \includegraphics[width=\linewidth]{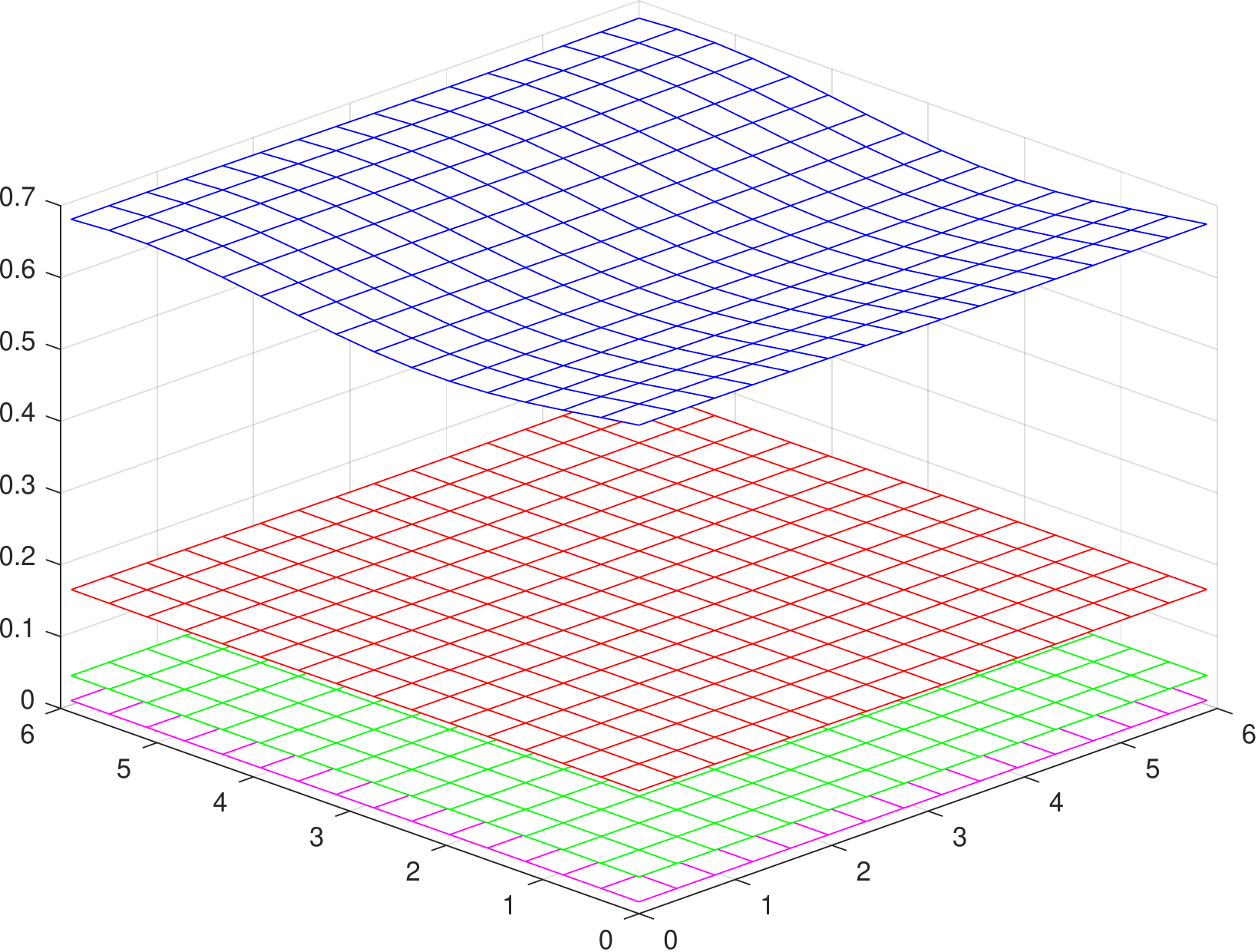}
    \caption{2nd component.}
    \label{fig:variation_mortality_increasing_comp2}
  \end{subfigure}
  \hfill
  \begin{subfigure}[t]{0.23\linewidth}
    \includegraphics[width=\linewidth]{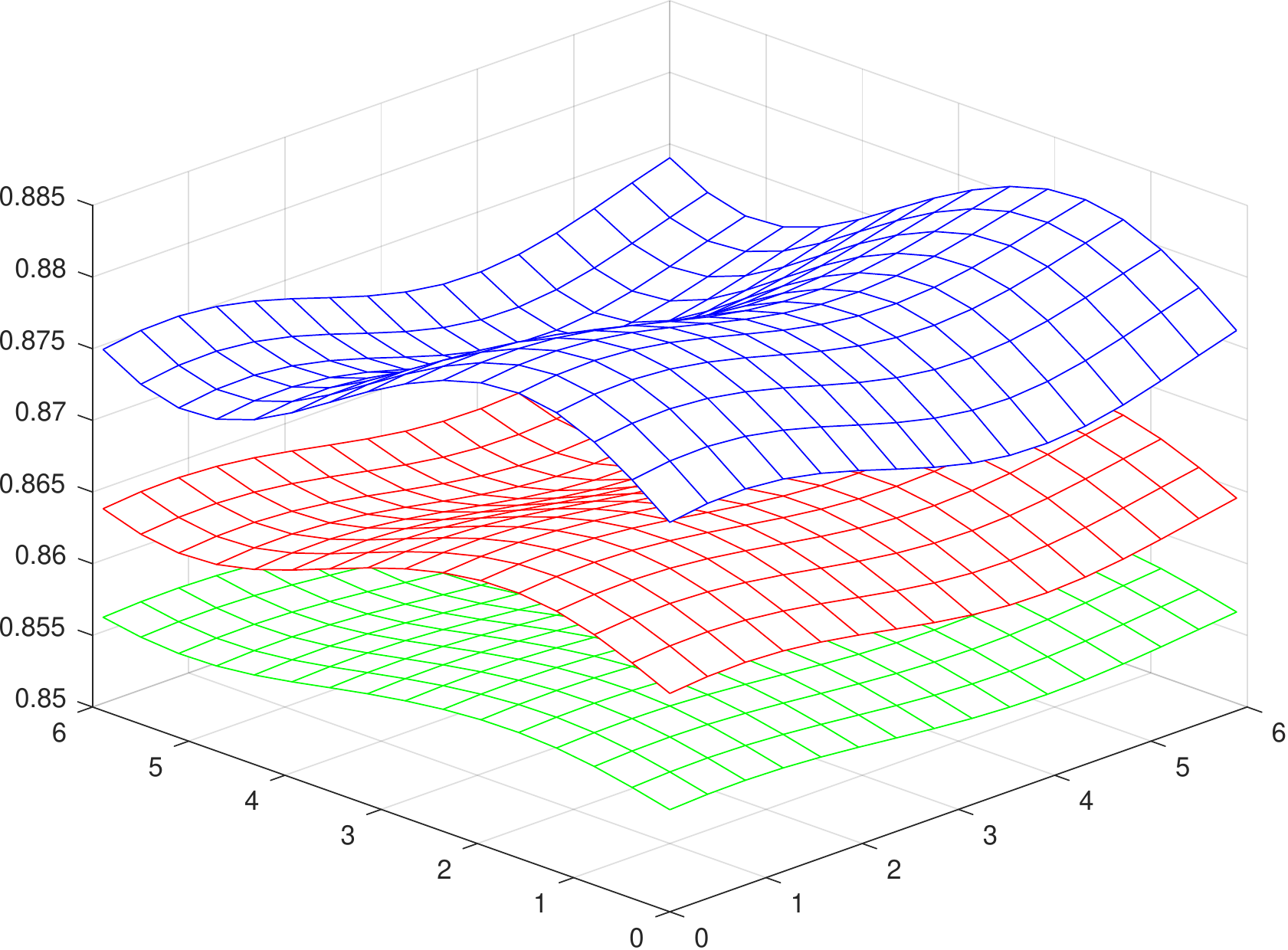}
    \caption{1st component.}
    \label{fig:variation_mortality_decreasing_comp1}
  \end{subfigure}
  \hfill
  \begin{subfigure}[t]{0.23\linewidth}
    \includegraphics[width=\linewidth]{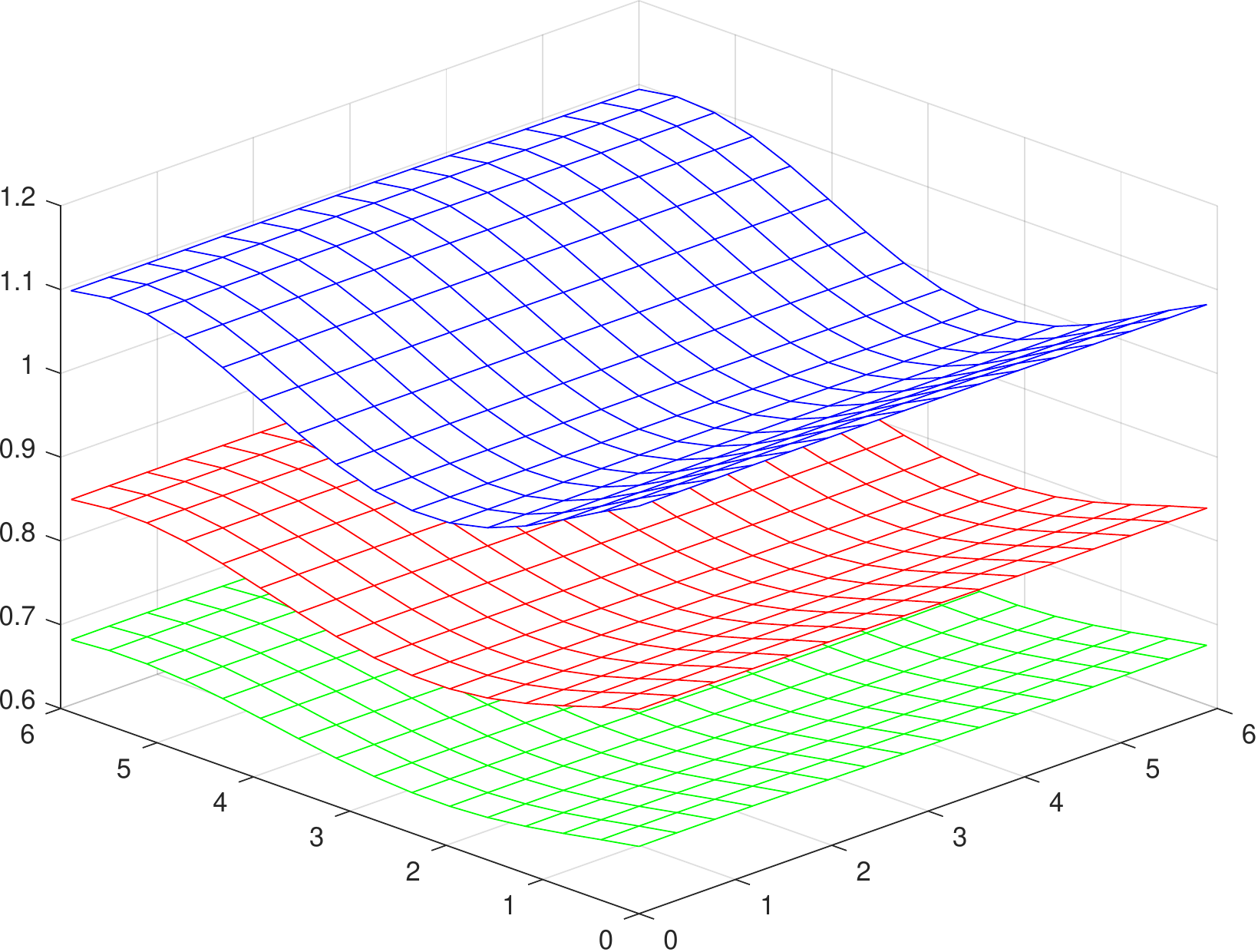}
    \caption{2nd component.}
    \label{fig:variation_mortality_decreasing_comp2}
  \end{subfigure}
  \caption{Variation of the attractor for the parameters
    $p(t)=\sin(t)$, $q(t)=\cos(t)$,
    $\alpha_{21}=\alpha_{12}=1$. Plots~\ref{fig:variation_mortality_increasing_comp1}
    and~\ref{fig:variation_mortality_increasing_comp2}: $\mu=27$ (magenta), $9$
    (green), $3$ (red), $1$
    (blue). Plots~\ref{fig:variation_mortality_decreasing_comp1}
    and~\ref{fig:variation_mortality_decreasing_comp2}: $\mu=1$ (green), $0.85$
    (red), $0.7$ (blue).}
  \label{fig:variation_mortality}
\end{figure}

Finally, let us investigate how the global pullback attractor is modified when
the mortality rate in the second patch is increased or decreased. In either
case, both components of the global pullback attractor vary monotonically
according to the value of the parameter $\mu$ (see
Figure~\ref{fig:variation_mortality}). We are considering the parameters
$\mu=0.7,0.85,1,3,9,27$, $p(t)=\sin(t)$, $q(t)=\cos(t)$,
$\alpha_{21}=\alpha_{12}=1$, for which hypotheses~\ref{a1}--\ref{a6} and
condition~\eqref{zona inv} hold.



\begin{thebibliography}{99}
\bibitem{bosh} {\sc P. Bogacki, L.F. Shampine}, A 3(2) pair of Runge-Kutta
  formulas, {\em Appl. Math. Letters\/}, {\bf 2} (1989), 321--325.

\bibitem{caos} {\sc J.A. Calzada, R. Obaya, A.M. Sanz}, Continuous separation
  for monotone skew-product semiflows: From theoretical to numerical results,
  {\em Discrete Contin. Dyn. Syst. Ser. B} {\bf 20} (3) (2015), 915--944.
\bibitem{book:CLR} \textsc{A. Carvalho, J.A. Langa, J. Robinson}:
  \emph{Attractors for infinite-dimensional non-au\-tono\-mous dynamical
    systems}, \textrm{Springer-Verlag, New York, 2013.}

\bibitem{chks} {\sc D.N. Cheban, P.E. Kloeden, B. Schmalfuss}, The relationship
  between pullback, forwards and global attractors of nonautonomous dynamical
  systems, {\em Nonlinear Dyn. Syst. Theory}, {\bf 2} (2002), 125--144.
\bibitem{faria11} {\sc T. Faria}, Global asymptotic behaviour for a Nicholson
  model with patch struture and multiple delays, {\em Nonlinear Anal.}
  \textbf{74} (2011), 7033--7046.

\bibitem{faria17} {\sc T. Faria}, Periodic solutions for a non-monotone family
  of delayed differential equations with applications to Nicholson systems, {\it
    J. Differential Equations\/} \textbf{263} (2017), 509--533.
\bibitem{faria21} {\sc T. Faria}, Permanence and exponential stability for
  generalised nonautonomous Nicholson systems, {\em Electron. J. Qual. Theory
    Differ. Equ.} \textbf{9} (2021), 1--19.
\bibitem{faos} {\sc T. Faria, R. Obaya, A.M. Sanz}, Asymptotic behaviour for a
  class of non-monotone delay differential systems with applications. {\em
    J. Dyn. Diff. Equat.} \textbf{30} (2018), 911--935.
\bibitem{faro} {\sc T. Faria, G. R\"{o}st}, Persistence, permanence and global
  stability for an n-dimensional Nicholson system, {\em J. Dynamics Differential
    Equations} \textbf{26} (2014), 723--744.
\bibitem{gubl} {\sc W.S.C. Gurney, S.P. Blythe, R.M. Nisbet}, Nicholson's
  blowflies revisited, {\em Nature} \textbf{287} (1980), 17--21.

\bibitem{have} {\sc J.K. Hale, S.M. Verduyn Lunel}, \textit{Introduction to
    Functional Differential Equations}, Applied Mathematical Sciences {\bf 99},
  Springer-Verlag, Berlin, Heidelberg, New York, 1993.

\bibitem{iser} {\sc A. Iserles}, {\em A First Course in the Numerical Analysis
    of Differential Equations\/}, Cambridge University Press, 1996.


\bibitem{klra} {\sc P.E. Kloeden, M. Rasmussen}, {\em Nonautonomous Dynamical
    Systems\/}, AMS Mathematical Surveys and Monographs, Vol. {\bf 176}, AMS,
  Providence, 2011.
\bibitem{magl} {\sc M.C. Mackey, L. Glass}, Oscillation and chaos in
  physiological control systems, {\em Science} \textbf{197} (4300) (1977),
  287--289.
\bibitem{noos07} {\sc S. Novo, R. Obaya, A.M. Sanz}, Exponential stability in
  non-autonomous delayed equations with applications to neural networks, {\em
    Discrete Contin. Dyn. Syst.} \textbf{18} (2007), 517--536.
\bibitem{noos13} {\sc S. Novo, R. Obaya, A.M. Sanz}, Uniform persistence and
  upper Lyapunov exponents for monotone skew-product semiflows, {\em
    Nonlinearity} \textbf{26} (2013), 2409--2440.
\bibitem{nosv} {\sc S. Novo, R. Obaya, A.M. Sanz, V.M. Villarragut}, The
  exponential ordering for non-autonomous delay systems with applications to
  compartmental Nicholson systems, to be published in {\em Proceedings of the
    Royal Society of Edinburgh Section A: Mathematics} (2023).

\bibitem{nuos4} {\sc C. N\'{u}\~{n}ez, R. Obaya, A.M. Sanz}, Minimal sets in
  monotone and concave skew-product semiflows I: A general theory, {\it
    J. Differential Equations\/} \textbf{252} (10) (2012), 5492--5517.
\bibitem{obsa16} {\sc R. Obaya, A.M. Sanz}, Uniform and strict persistence in
  monotone skew-product semiflows with applications to non-autonomous Nicholson
  systems, {\it J. Differential Equations\/} \textbf{261} (2016), 4135--4163.
\bibitem{obsa18} {\sc R. Obaya, A.M. Sanz}, Is uniform persistence a robust
  property in almost periodic models? A well-behaved family: almost-periodic
  Nicholson systems, {\em Nonlinearity} \textbf{31} (2018), 388--413.
\bibitem{shth} {\sc L.F. Shampine, S. Thompson}, Solving DDEs in MATLAB, {\em
    Applied Numerical Mathematics\/}, {\bf 37} (2001), 441--458.

\bibitem{shyi} {\sc W. Shen, Y. Yi}, Almost Automorphic and Almost Periodic
  Dynamics in Skew-Product Semiflows, {\em Mem. Amer. Math. Soc.}  {\bf 647},
  Amer. Math. Soc., Providence 1998.
\bibitem{smit} {\sc H.L. Smith}, {\em Monotone Dynamical Systems. An
    Introduction to the Theory of Competitive and Cooperative Systems},
  Amer. Math. Soc., Providence, 1995.
\bibitem{zhetal} {\sc H. Zhang, Q. Cao, H. Yang}, Asymptotically almost periodic
  dynamics on delayed Nicholson-type system involving patch structure, {\em
    J. Inequal. Appl.\/} (2020), doi.org/10.1186/s13660-020-02366-0.

\end{thebibliography}
\end{document}